\newtheorem{theorem}{Theorem}
\newtheorem{lemma}{Lemma}
\newcommand{\cb}{\mathbf{c}}
\newcommand{\cF}{\mathcal{F}}
\newcommand{\cS}{\mathcal{S}}
\newcommand{\commentout}[1]{}
\newcommand{\bF}{\boldsymbol{F}}
\newcommand{\bW}{\boldsymbol{W}}
\newcommand{\bc}{\boldsymbol{c}}
\newcommand{\bb}{\boldsymbol{b}}
\newcommand{\be}{\boldsymbol{e}}
\newcommand{\bx}{\boldsymbol{x}}
\newcommand{\mbNt}{\mathbb{N}_t}
\newcommand{\mbNx}{\mathbb{N}_x}
\DeclareMathOperator*{\argmin}{arg\,min}
\newcolumntype{C}{>{\centering\arraybackslash}X}
\newcolumntype{s}{>{\hsize=.4\hsize}X}
\newcolumntype{o}{>{\hsize=.1\hsize}X}
\tikzstyle{startstop} = [rectangle, rounded corners, 
\tikzstyle{line} = [draw, -latex']
\tikzstyle{block} = [rectangle, rounded corners, 
\tikzstyle{freeblock} = [rectangle, rounded corners, 
\tikzstyle{io} = [trapezium, 
\tikzstyle{smallblockpurple} = [rectangle, rounded corners,
\tikzstyle{medianblockpurple} = [rectangle, rounded corners,
\tikzstyle{smallblock} = [rectangle, rounded corners,
\tikzstyle{smallblock_special} = [rectangle, rounded corners,
\tikzstyle{smallblock_final} = [rectangle, rounded corners,
\tikzstyle{decision} = [diamond, 
\tikzstyle{arrow} = [thick,->,>=stealth]
\title{Fourier Features for Identifying Differential Equations
(FourierIdent)}
\author{Mengyi Tang\thanks{School of Mathematics, Georgia Institute of Technology, Atlanta, GA 30332-0160. Email: tangmengyi@gatech.edu},  
Hao Liu\thanks{
Department of Mathematics, Hong Kong Baptist University. Email: haoliu@ hkbu.edu.hk. Research is partially supported
by HKBU 179356, NSFC 12201530 and HKRGC ECS 22302123.  
}, 
Wenjing Liao\thanks{School of Mathematics, Georgia Institute of Technology, Atlanta, GA 30332-0160. Email: wliao60@gatech.edu. This work was partially funded by NSF 2145167.}, 
Sung Ha Kang\thanks{School of Mathematics, Georgia Institute of Technology, Atlanta, GA 30332-0160. Email: kang@math.gatech.edu. This work was partially funded by Simons Foundation grant 584960.
}
}
\begin{document}

\maketitle

\begin{abstract}  
We investigate the benefits and challenges of utilizing the frequency information in differential equation identification.
Solving  differential equations and Fourier analysis are closely related, yet there is limited work in exploring this
connection in the identification of differential equations.  
Given a single realization of the differential equation perturbed by noise, we aim to identify the underlying differential equation  governed by a linear combination of  linear and nonlinear differential and polynomial terms in the frequency domain.
This is  challenging due to large magnitudes and sensitivity to noise. We introduce a Fourier feature denoising, and define the meaningful data region and the core regions of features to reduce the effect of noise in the frequency domain.  We use Subspace Pursuit on the core region of the time derivative feature, and introduce a group trimming step to refine the support.   We further introduce a new energy based on the core regions of features for coefficient identification.  
Utilizing the core regions of features serves two critical purposes: eliminating the low-response regions dominated by noise, and enhancing the accuracy in coefficient identification. 
The proposed method is tested on various differential equations with linear, nonlinear, and high-order derivative feature terms.  Our results demonstrate the advantages of the proposed method, particularly on complex and highly corrupted datasets.
\end{abstract}

\section{Introduction}
Differential Equations are widely used to describe physical laws. The original discoveries of differential equations  for real-world physical processes typically require
a good understanding of the physical laws, and supportive evidence from empirical observations. In recent years, data-driven discoveries of differential equations become popular, where one can utilize data to identify the underlying equation.  

This paper studies this inverse problem of identifying a differential equation in the form of 
\begin{equation}
  \frac{\partial  u(x,t) }{\partial t} =  G(u)  =  \sum_{l=1}^L c_l g_l(u), 
\label{unknown eqn}
\end{equation}
from a single trajectory of noisy observations. Our objective is to find the unknown coefficient vector $\bc = (c_l)_{l=1,\ldots,L} \in \mathbb{R}^L$, including the support of the vector and the value of each $c_l$ to identify the  differential equation. 
The form of (\ref{unknown eqn}) includes a wide range of PDEs in science and engineering applications, such as the heat equation, transport equation, Burgers' equation, Korteweg-de Vires (KdV) equation, Kuramoto-Sivashinsky (KS) equation, and many others. This inverse problem is often regarded as the model identification problem, where one identifies the active features in a large dictionary, which is more challenging than the parameter estimation problem when the PDE form is given.

The identification of differential equations has a long history in science and engineering, dating back to 1970s \cite{akaike1974new,bellman1969new}. From 1980s to 2000s, parameter estimation in differential equations was widely studied \cite{baake1992fitting,bongard2007automated,muller2004parameter,schmidt2009distilling,he2022numerical}. In recent years, sparse regression was introduced to the model identification problem, which allows one to identify a small number of active features in the underlying equation from a large dictionary, e.g., 
Sparse Identification of Nonlinear Dynamics (SINDy) \cite{brunton2016discovering,fasel2022ensemble, kaheman2020sindy,rudy2019data}, 
PDE identification via data discovery and sparse optimization \cite{schaeffer2017learning},   
using differential terms for the dictionary \cite{he2022robust, kang2021ident,rudy2019data,schaeffer2017learning,he2023group}, and theoretical studies \cite{he2022asymptotic,he2023much}. 
Recent progress using weak formulation of features have significantly improved the robustness in the identification of differential equations, e.g., \cite{golden2023physically,  gurevich2021learning,gurevich2019robust,messenger2021weak, messenger2021weak2, messenger2023coarse,reinbold2020using,tang2023weakident}: A well-designed test function is introduced in  \cite{messenger2021weak, messenger2021weak2}, a statistical approach with bootstrap in \cite{fasel2022ensemble}, and improvements in sparsity-promotion methods  in \cite{he2022robust,russo2022convergence,tang2023weakident}.

In this paper, the governing function $ G(u) $ in \eqref{unknown eqn} is assumed to be a linear combination of linear and nonlinear features of $u$, e.g. the $g_l (u)$'s are either monomials $u^\beta$ or spatial derivatives of monomials  $\frac{\partial^{\alpha}}{\partial x^{\alpha}}(u^\beta)$, where $\alpha,\beta$ are nonnegative integers. Motivated by the weak formulation of features in \cite{messenger2021weak, tang2023weakident}, we consider identifying differential equation directly in the frequency domain:
\begin{equation}
\mathcal{F} \left\{ \frac{\partial  u(x,t) }{\partial t} \right\}= \mathcal{F}\{ G(u) \} =  \sum_{l=1}^L c_l \mathcal{F}\left\{ g_l(u) \right\},
\label{unknown eqn fourier}
\end{equation}
where $\cF = \cF_{x,t}$ denotes the two-dimensional \textit{Fourier transform} in $x$ and $t$.

Parameter estimation for differential equations in the frequency domain has been considered in science and engineering applications.
In \cite{van2014frequency}, a sample maximum likelihood estimator in the frequency domain was used to identify some spatially dependent parameters in a parabolic equation, from the given PDE form. 
In \cite{goos2017frequency}, a frequency domain identification technique  was proposed to estimate Linear Parameter-Varying differential equations with weighted nonlinear least squares.
\cite{jagtap2020adaptive} examined how physics-informed neural networks successively capture  different frequencies of the solution such that the low-frequency components is captured at the beginning of training then the high-frequency components are captured as the training process proceeds. 
The authors in \cite{zhang2021robust} considered cutting the given data in the frequency domain after numerical differentiation of the neural-network-denoised data in the model identification problem.
The authors in \cite{zhao2022much} provided a mathematical theory on the possibility of learning a PDE from a single solution trajectory. 
While the frequency information of PDEs has  been considered in some identification methods in the literature, there is no general framework to identify an unknown PDE using frequency responses.

We propose to use Fourier features for IDENTifying differential equations  (FourierIdent) in this paper. After taking the Fourier transform of \eqref{unknown eqn}, we perform model selection and parameter estimation on \eqref{unknown eqn fourier} in the frequency domain, which is different from existing works on model identification in the physical domain.  For example in \cite{messenger2021weak, tang2023weakident}, the integral forms are calculated through convolution and evaluated in the frequency domain using the Fast Fourier Transform (FFT), while the rest of the computation is performed in the physical domain with the inverse FFT.   
We propose a robust framework, FourierIdent, for identifying differential equations directly in the frequency domain. If the coefficients are successfully identified, the resulting governing equations in the physical domain and in the frequency domain are identical.

The contributions of this paper are summarized as follows:
\begin{enumerate}
    \item We propose a stable denoising method in the frequency domain to  {effectively} handle  large frequency magnitudes   and to improve the robustness against noise.  We use Fourier analysis and fit the correct decay of the coefficients to define the meangingful data region, and define the core regions of features for identification.  The Fourier transform is computed through FFT for  efficiency.  
    \item We develop FourierIdent to identify the differential equation from a single noisy trajectory of the PDE in frequency domain.   Within this framework, we use the Subspace Pursuit greedy algorithm and propose group trimming to find the coefficient support at each sparsity level.  We further propose a new energy based on the core regions of features to refine the coefficient identification. 
    \item FourierIdent shows benefits in identifying differential equations under high noise levels, and when 
    the given data have a complex pattern, e.g., with rich information in different frequency modes.  We present  various numerical experimental results in comparison with WSINDy \cite{messenger2021weak} and WeakIdent \cite{tang2023weakident}. 
\end{enumerate}

This paper is organized as follows: In Section \ref{s: problem description}, we provide the problem setup and the formulation of Fourier features. The error analysis of using one Fourier feature is discussed in Subsection \ref{ss: error analysis of fourier features}. In Section \ref{s: denoising a feature}, we propose the denoising method for Fourier features and define the core regions of features in the frequency domain.  
In Section \ref{s: feature selection and coefficient recovery}, we present the methodology of FourerIdent,  using Subspace Pursuit, group trimming and the new enegry based on the core regions of features for coefficient identification.  
In Section \ref{s: numerical implementation details} and \ref{s: numerical experiments}, we provide details of implementation and numerical results of FourierIdent. 
We conclude the paper with remarks in Section \ref{s:conclusion}.

\section{Problem set-up and Fourier features} \label{s: problem description}
We present FourierIdent on one-dimensional PDEs, but our proposed method can be  extended to high-dimensional PDEs.
We let the physical domain be  $\Omega = [0, X] \times [0, T]$ with $X>0, T>0$, and data to be sampled on a uniform grid with step size $\Delta x$, and $\Delta t$. 
  We denote  $x_i = i \Delta x, t^n = n \Delta t$, and the PDE solution at $(x_i,t^n)$ as  $U_i^n=u(x_i,t^n)$. Our observation of the solution at $(x_i,t^n)$ is contaminated by noise such that
\begin{equation}
    U_{noise, i}^n = U_i^n + \epsilon_i^n \;\; {\rm for}\;\;(x_i,t^n) \in \Omega,
    \label{e:u+eps}
\end{equation}
where $\epsilon_i^n$ represents zero-mean noise at each point $(x_i, t^n)$. 
We denote the given data as 
\begin{equation}
    \mathcal{D} = \{ U_{noise, i}^n | i = 0,1,2,..., \mathbb{N}_x-1, n = 0,1,...,\mathbb{N}_t -1 \} \in \mathbb{R}^{ \mathbb{N}_x \times \mathbb{N}_t},
    \label{e: given data}
\end{equation}
where $\mbNx$ and $\mbNt \in \mathbb{N}$ are the discretization sizes in the spatial and temporal dimensions.  
We assume that the underlying equation has the form of \eqref{unknown eqn}:
\begin{align}
	\frac{\partial u }{\partial t}(x,t) 
 = \sum_{l=1}^L c_l g_l 
 =  \sum_{l=1}^L c_l \frac{\partial^{\alpha_l} f_l(u)}{\partial x^{\alpha_l}}, \ \text{with} \ f_l(u) = u^{\beta_l},
	\label{eq.pde}
\end{align}
where $L$ is the total number of features in the dictionary which consists of feature terms as  $\left\{\frac{\partial^{\alpha_l} f_l(u)}{\partial x^{\alpha_l}}\right\}_{l=1}^L$.  
We use the index $l$ and $(\alpha_l, \beta_l)$ to indicate that the $l$-th feature $g_l$ which is $\frac{\partial^{\alpha_l} u^{\beta_l}}{\partial x^{\alpha_l}}$, the $\alpha_l$-order derivative of the monomial $u^{\beta_l}$  where $\alpha_l,\beta_l$ are nonnegative integers. 

Since we consider PDEs in the frequency domain, we assume that $u$ is a periodic function in $x$ and $t$.  For non-periodic functions, we extend the function to a periodic function, which is to be discussed in Subsection \ref{ss: boundary extension}. 
We define the Fourier transform of $u(x,t)$ as: 
\begin{equation*}
    \cF(u)[\xi_x, \xi_t] = \int_{0}^{T} \int_{0}^{X} u(x, t )e^{ - (\frac{\xi_x {x}}{X}+ \frac{\xi_t {t}}{T}) 2\pi \sqrt{-1}}dx dt.
\end{equation*}
The Fourier transform of  \eqref{eq.pde} with respect to $x$ and $t$  becomes 
\begin{align}
& \frac{2\pi}{T}\xi_t\sqrt{-1}\cF(u) [\xi_x, \xi_t] \nonumber\\
= &  \sum_{l=1}^L c_l \left(\frac{2 \pi }{X}\xi_x\sqrt{-1}\right)^{\alpha_l} \cF(f_l) [\xi_x, \xi_t]  \nonumber
\\
= & \begin{bmatrix}	(\frac{2 \pi }{X}\xi_x\sqrt{-1})^{\alpha_1}\cF(f_1(u)) & \cdots &(\frac{2 \pi }{X}\xi_x\sqrt{-1})^{\alpha_l}\cF (f_2(u)) & \cdots & (\frac{2 \pi }{X}\xi_x\sqrt{-1})^{\alpha_L}\cF(f_L(u))
\end{bmatrix} \cb .
\label{eq.pde.fft}
\end{align}
The first term in \eqref{eq.pde.fft} is the Fourier feature of  $u_t$, and $(\frac{2\pi}{X}\xi_x\sqrt{-1})^{\alpha_l}\cF(f_l(u))$ in the right of \eqref{eq.pde.fft} is that of the $l$-th feature in the library, i.e., $\frac{\partial^{\alpha_l} u^{\beta_l}}{\partial x^{\alpha_l}}$. 

Our idea is to find the support and values of the coefficient vector $\cb$  at the frequency modes $(\xi_x,\xi_t)$ for $\xi_x = 0,1,...,\mbNx-1, \xi_t = 0,1,2...,\mbNt-1$.  We denote $h=\mathcal{H}(\xi_x,\xi_t)$  as the index of the frequency mode  $(\xi_x, \xi_t)$, where $\mathcal{H}$ is a map from the frequency mode $(\xi_x, \xi_t)$ to the unique index $h\in\{1,2,\ldots,H\}$
with
 $H = \#\{(\xi_x,\xi_t): \xi_x = 0,1,...,\mbNx-1, \xi_t = 0,1,...,\mbNt-1\} = \mbNx \mbNt$. 
  The notation $\#$ denotes the cardinality of a set. 
FourierIdent aims to solve 
\textbf{a discrete Fourier system} of \eqref{eq.pde.fft} 
\begin{equation}
    \bF \bc = \bb,
    \label{e: fc = b}
\end{equation}
where 
\[
\bF = (a_{h, l}) \in \mathbb{C}^{H \times L}
, \;\; \bc = (c_l) \in \mathbb{R}^L, \;\; \bb = (b_{h}) \in \mathbb{C}^{H},\]
for 
\begin{equation*}
    a_{h, l} = \left( \frac{2\pi \xi_x}{X} \sqrt{-1} \right)^{\alpha_l}\sum_{p=0}^{\mbNx-1} \sum_{q=0}^{\mbNt-1} \Biggl\{e^{\displaystyle -\left(\frac{2\pi p}{\mathbb{N}_x }\xi_x + \frac{2 \pi q}{\mathbb{N}_t }\xi_t \right)\sqrt{-1}}
\left(U_{noise,p}^q\right)^{\beta_l}  \Biggr\}  \Delta x \Delta t
\end{equation*}
and 
\begin{equation*}
    b_{h, l} = \left( \frac{2\pi \xi_t}{T} \sqrt{-1} \right)\sum_{p=0}^{\mbNx-1} \sum_{q=0}^{\mbNt-1} \Biggl\{e^{\displaystyle -\left(\frac{2\pi p}{\mathbb{N}_x }\xi_x + \frac{2 \pi q}{\mathbb{N}_t }\xi_t \right)\sqrt{-1}}
U_{noise,p}^q \Biggr\} \Delta x \Delta t.
\end{equation*}
Here {$\Delta x = \frac{X}{\mathbb{N}_x },  \Delta t =  \frac{T}{\mathbb{N}_t }$}.   
The $l$-th column of $\bF$ is the discrete Fourier feature associated with the $l$-th feature in the dictionary, and the vector $\boldsymbol{b} $ contains the discrete Fourier feature  for $u_t$.  
The objective of this paper is to find the support and values of this coefficient vector in \eqref{e: fc = b}
$$\boldsymbol{c}=[c_1, c_2,\cdots, c_L]^{\top}$$ from the given data $\mathcal{D}$.

\subsection{Error analysis for Fourier features}
\label{ss: error analysis of fourier features}

We analyze the error of the linear system \eqref{e: fc = b} with discrete Fourier features.  Assume that $\{\epsilon_i^n\}$  in \eqref{e:u+eps} are  \textit{i.i.d} and have zero-mean such that $\mathbb{E}[\epsilon_i^n] = 0$.
Denote the true coefficient vector for the underlying PDE by $\bc$, and the support of the true coefficient vector  by ${\rm Supp}^* = \{ l: \ c_l\neq 0\}$. The true coefficients satisfy the following equation :
\begin{equation}
\frac{2\pi}{T}\xi_t\sqrt{-1}\cF(u) [\xi_x, \xi_t]
=\sum_{l \in \rm{Supp}^* } c_l\left(\frac{2\pi}{X}\xi_x\sqrt{-1}\right)^{\alpha_l} \cF(f_l) [\xi_x, \xi_t],
\label{e: pde fft true}
\end{equation}
for $\xi_x = 0,1,...,\mbNx-1, \xi_t = 0,1,...,\mbNt-1$.

The $L_\infty$-residual of  \eqref{e: fc = b}  consists of two errors: one is the discretization error of Fourier features and the other is the error from noise:
\begin{equation*}
    e = \|\bF \bc - \bb\|_{\infty} \leq {e}_{\rm{Fourier}} + {e}_{\rm{noise}}.
\end{equation*}
The discretization error of Fourier features is 
\begin{align*}
{e}_{\rm{Fourier}}  = &  
\max_{\xi_x,\xi_t}\Bigg|\sum_{l \in \rm{Supp}^*}c_l
\left( \frac{2\pi \xi_x}{X} \sqrt{-1} \right)^{\alpha_l}
\sum_{p=0}^{\mbNx-1} \sum_{q=0}^{\mbNt-1} \Biggl\{e^{-\left(\frac{2\pi p}{\mathbb{N}_x }\xi_x + \frac{2 \pi q}{\mathbb{N}_t }\xi_t \right)\sqrt{-1}}
\left(U_p^q\right)^{\beta_l} \Biggr\}  \nonumber\\
& -
\left( \frac{2\pi \xi_t}{T} \sqrt{-1} \right)\sum_{p=0}^{\mbNx-1} \sum_{q=0}^{\mbNt-1} \Biggl\{e^{-\left(\frac{2\pi p}{\mathbb{N}_x }\xi_x + \frac{2 \pi q}{\mathbb{N}_t }\xi_t \right)\sqrt{-1}}
U_p^q \Biggr\} \Bigg| \frac{XT}{\mathbb{N}_x \mathbb{N}_t},   
\end{align*}
and decreases to $0$ as the data sampling grid is refined:
$$\lim_{\Delta x,\Delta t \rightarrow 0}{e}_{\rm{Fourier}} = 
 \max_{\xi_x,\xi_t} \left| \sum_{l\in {\rm Supp}^*} c_l \left(\frac{2 \pi }{X}\xi_x\sqrt{-1}\right)^{\alpha_l} \cF(u^{\beta_l}) [\xi_x, \xi_t] - \frac{2\pi}{T}\xi_t\sqrt{-1}\cF(u) [\xi_x, \xi_t] \right| = 0. $$
The error from the noise is 
\begin{align}
{e}_{\rm{noise}} = & \max_{\xi_x, \xi_t} |e_{\rm{noise}}[\xi_x,\xi_t]|\nonumber\\
    = & \max_{\xi_x, \xi_t}\Bigg|\sum_{l \in \rm{Supp}^*}c_l
\left( \frac{2\pi \xi_x}{X} \sqrt{-1} \right)^{\alpha_l}
\sum_{p=0}^{\mbNx-1} \sum_{q=0}^{\mbNt-1} \Biggl\{e^{  -\left(\frac{2\pi p}{\mathbb{N}_x}\xi_x + \frac{2 \pi q}{\mathbb{N}_t}\xi_t \right)\sqrt{-1}}  \left(\left(U_{noise,p}^q\right)^{\beta_l} - \left(U_{p}^q\right)^{\beta_l} \right)\Biggr\} \nonumber\\
  & -
\left( \frac{2\pi \xi_t}{T} \sqrt{-1} \right)\sum_{p=0}^{\mbNx-1} \sum_{q=0}^{\mbNt-1} \Biggl\{e^{  -\left(\frac{2\pi p}{\mathbb{N}_x}\xi_x + \frac{2 \pi q}{\mathbb{N}_t}\xi_t \right)\sqrt{-1}}
\epsilon_p^q \Biggr\} \Bigg| \frac{XT}{\mathbb{N}_x \mathbb{N}_t}, 
\label{e: e_noise}
\end{align}
where ${e}_{\rm{noise}}[\xi_x,\xi_t]$ denotes the residual at the frequency mode $(\xi_x,\xi_t)$.
In the following, we prove an upper bound for the error ${e}_{\rm noise}$ resulted from noise. 
\begin{theorem}
\label{theoremerror}
Consider the differential equation  in \eqref{eq.pde} whose Fourier form is  \eqref{e: pde fft true}.
Assume that the given data $\mathcal{D}$ in \eqref{e: given data} are contaminated by i.i.d.  noise:  $\{\epsilon_i^n\}$ are i.i.d  bounded random variables with $\mathbb{E}[\epsilon_i^n] = 0$ 
and  $|\epsilon_i^n| \le  \epsilon$ for some $\epsilon >0$. The area of $\Omega$ is denoted by $|\Omega| = XT$. 
Then the error ${e}_{\rm noise}$   satisfies
\begin{equation}
{e}_{\rm{noise}} \leq   \frac{XT}{\mathbb{N}_x\mathbb{N}_t}S \epsilon + 
\mathcal{O}(\epsilon^2),
\label{eq:thm1error}
\end{equation}
where 
\begin{equation}
S =  \max_{\xi_x, \xi_t}
 \Bigg|  \sum_{p=0}^{\mbNx-1}  \sum_{q=0}^{\mbNt-1}   
e^{-\left(\frac{2\pi p}{\mathbb{N}_x }\xi_x + \frac{2 \pi q}{\mathbb{N}_t }\xi_t \right)\sqrt{-1}}  \left( \sum_{l \in
\rm{Supp}^*}c_l
\left( \frac{2\pi \xi_x}{X} \sqrt{-1} \right)^{\alpha_l}\beta_l(U_p^q)^{\beta_l-1} 
- \frac{2\pi \xi_t}{T}
\sqrt{-1} \right)
\Bigg| .
\label{eq:S}
\end{equation}
\end{theorem}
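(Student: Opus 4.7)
The plan is to Taylor-expand the noisy observations and isolate the piece that is linear in $\epsilon_p^q$; this linear piece should assemble exactly into the quantity $S$, while the remaining higher-order terms will collapse into $\mathcal{O}(\epsilon^2)$. First I would apply the binomial theorem to each noisy monomial,
\begin{equation*}
(U_{noise,p}^q)^{\beta_l} = (U_p^q+\epsilon_p^q)^{\beta_l} = (U_p^q)^{\beta_l} + \beta_l (U_p^q)^{\beta_l-1}\epsilon_p^q + \sum_{k=2}^{\beta_l}\binom{\beta_l}{k}(U_p^q)^{\beta_l-k}(\epsilon_p^q)^{k},
\end{equation*}
so that $(U_{noise,p}^q)^{\beta_l}-(U_p^q)^{\beta_l}$ decomposes as a noise-linear term plus a remainder of size $\mathcal{O}(\epsilon^2)$ uniformly in $(p,q,l)$, using boundedness of $U_p^q$ on the finite grid and $|\epsilon_p^q|\le\epsilon$.

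Next I would substitute this decomposition into \eqref{e: e_noise}. The linear-in-$\epsilon$ contribution coming from the feature sum and the already-linear contribution from the $u_t$ term have a common factor $\epsilon_p^q$ and a common Fourier kernel $e^{-(\frac{2\pi p}{\mbNx}\xi_x+\frac{2\pi q}{\mbNt}\xi_t)\sqrt{-1}}$, so they combine into
\begin{equation*}
e_{\rm noise}[\xi_x,\xi_t] = \frac{XT}{\mbNx\mbNt}\sum_{p,q} e^{-\left(\frac{2\pi p}{\mbNx}\xi_x+\frac{2\pi q}{\mbNt}\xi_t\right)\sqrt{-1}}\Bigg(\sum_{l\in\mathrm{Supp}^*}c_l\Big(\tfrac{2\pi\xi_x}{X}\sqrt{-1}\Big)^{\alpha_l}\beta_l(U_p^q)^{\beta_l-1}-\tfrac{2\pi\xi_t}{T}\sqrt{-1}\Bigg)\epsilon_p^q + R_{\xi_x,\xi_t},
\end{equation*}
where $R_{\xi_x,\xi_t}$ aggregates the binomial remainders and is $\mathcal{O}(\epsilon^2)$ after summing over $(p,q,l)$ and multiplying by $\frac{XT}{\mbNx\mbNt}$.

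Third, I would bound the leading sum by pulling the scalar factor $|\epsilon_p^q|\le\epsilon$ out of the sum, which leaves precisely the quantity whose supremum over $(\xi_x,\xi_t)$ defines $S$ in \eqref{eq:S}. Taking $\max_{\xi_x,\xi_t}$ and combining with the $\mathcal{O}(\epsilon^2)$ remainder yields \eqref{eq:thm1error}.

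The main obstacle is the bookkeeping in the second step: the features have different polynomial degrees $\beta_l$ and different derivative orders $\alpha_l$, so the higher-order binomial terms carry factors of $\xi_x^{\alpha_l}$ that must remain inside the $\max$ over frequencies, rather than being absorbed into the $\mathcal{O}(\cdot)$ constant. I would keep the dependence on $(\xi_x,\xi_t)$ explicit throughout and only hide $L$-dependent and $\|U\|_\infty$-dependent constants in $\mathcal{O}(\epsilon^2)$, so that the linear-in-$\epsilon$ contribution cleanly factors as $\frac{XT}{\mbNx\mbNt}S\epsilon$ as claimed.
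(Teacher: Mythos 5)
Your proposal is correct and follows essentially the same route as the paper: the paper's proof consists precisely of Taylor-expanding $(U_{noise,p}^q)^{\beta_l}=(U_p^q+\epsilon_p^q)^{\beta_l}$ to first order in $\epsilon_p^q$ inside the definition \eqref{e: e_noise}, merging the linear-in-noise terms with the $u_t$ contribution under the common Fourier kernel, and bounding the result by $\frac{XT}{\mathbb{N}_x\mathbb{N}_t}S\epsilon+\mathcal{O}(\epsilon^2)$ with $S$ as in \eqref{eq:S}. Your additional bookkeeping (explicit binomial remainder, uniform boundedness of $U_p^q$ on the finite grid) simply makes explicit what the paper's one-line expansion leaves implicit.
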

\begin{proof}
Following the definition of ${e}_{\rm{noise}}$ in \eqref{e: e_noise}, we have
\begin{align*}
{e}_{\rm{noise}} 
= &  
    \max_{\xi_x,\xi_t}\Bigg|\sum_{l \in \rm{Supp}^*}c_l
\left( \frac{2\pi \xi_x}{X} \sqrt{-1} \right)^{\alpha_l}
\sum_{p=0}^{\mbNx-1} \sum_{q=0}^{\mbNt-1} \Biggl\{e^{-\left(\frac{2\pi p}{\mathbb{N}_x }\xi_x + \frac{2 \pi q}{\mathbb{N}_t }\xi_t \right)\sqrt{-1}} \left(\beta_l \left(U_p^q\right)^{\beta_l-1} \epsilon_p^q+\mathcal{O}((\epsilon_p^q)^2)\right)\Biggr\} \nonumber\\
 & -
\left( \frac{2\pi \xi_t}{T} \sqrt{-1} \right)\sum_{p=0}^{\mbNx-1} \sum_{q=0}^{\mbNt-1} \Biggl\{e^{-\left(\frac{2\pi p}{\mathbb{N}_x }\xi_x + \frac{2 \pi q}{\mathbb{N}_t }\xi_t \right)\sqrt{-1}}
\epsilon_p^q \Biggr\} \Bigg| \frac{XT}{\mathbb{N}_x\mathbb{N}_t} 
\leq   \frac{XT}{\mathbb{N}_x\mathbb{N}_t}S \epsilon   + 
 \mathcal{O}(\epsilon^2),
\end{align*}
where  $S$ is defined in \eqref{eq:S}.
\end{proof}

First, Theorem \ref{theoremerror} shows that the error $\be_{\rm{noise}}$ scales linearly with respect to the noise level $\epsilon$.
In comparison,  numerical differentiation of the features in the physical domain can give error as 
\begin{equation*}
\mathcal{O}\left(\Delta t + \Delta x^{p+1-r} + \frac{\epsilon}{\Delta t} +\frac{\epsilon}{(\Delta x) ^r} \right)
\end{equation*}
where $r$ is the highest order of derivatives for the active features in the true support, and  numerical differentiation is carried by interpolating the data by a $p$th order polynomial \cite{kang2021ident, he2022robust}. The noise is magnified by $1/\Delta t$ and $1/(\Delta x)^r$, which shows the challenges of dealing with noisy data. 
Theorem \ref{theoremerror} shows the formulation of Fourier features \eqref{eq:thm1error} is more robust against noise in comparison with  numerical formulation of differential features. 
In the weak formulation, such as WeakIdent\cite{tang2023weakident}, the error (resulted from noise) of the linear system also scales linearly with respect to  noise, 
\begin{align}
{e}_{\rm noise}^{\rm weak} & \le 
     \bar{S}^*   |\Omega_h|   \epsilon + \mathcal{O}\left(\epsilon^2\right), \label{eq:weakidenterror}
     \\
     & \text{with}\   \bar{S}^*  =\max_h \sup_{(x_j,t^k) \in \Omega_h}
\bigg|
\sum_{l\in \rm{Supp}^*}(-1)^{\alpha_l}{c_l}
{\beta_l}(U_j^k)^{ \beta_l - 1}
\frac{\partial^{\alpha_l} \phi }{\partial x^{\alpha_l}}(x_j, t^k)
-  \frac{\displaystyle \partial \phi}{\displaystyle \partial t}  (x_j, t^k) \nonumber
\bigg|,
\end{align}
where $h$ is an index of the test function, $\Omega_h$ is the support of the $h$-th test function, and $|\Omega_h|$ is the area of $\Omega_h$ \cite[Theorem 1]{tang2023weakident}.  The error bounds in both  \eqref{eq:thm1error} and \eqref{eq:weakidenterror} scale linearly with respect to the noise level $\epsilon$, which demonstrates that the numerical formulations of  weak features and Fourier features are both robust to noise. 

Secondly, the major difference between \eqref{eq:thm1error} and \eqref{eq:weakidenterror} is that, the error in WeakIdent is local depending on the local support $\Omega_h$ of the $h$-th test function, while the error in FourierIdent is global.

\section{Fourier feature denoising and core regions of features} \label{s: denoising a feature}

One of the main difficulties of FourierIdent is that frequency responses may 
{have large magnitude}, which is different from the linear system generated in the physical domain.  As in \cite{kang2021ident, he2022robust, tang2023weakident}, even in physical domain denoising is very important for coefficient identification.   
In this section, we propose a  denoising process for FourierIdent using (i) Fourier feature denoising as in the physical domain, and defining partitions  of frequency domain via (ii) the meaningful data region, and (iii) the core regions of features in the frequency domain.  We give details of denoising, then present the main algorithm of  FourierIdent  in Section \ref{s: feature selection and coefficient recovery}.

\subsection{Denoising Fourier features}
We first denoise  Fourier features by applying convolution with a Gaussian-shape  kernel $\phi(x,t)$. 
By the convolution theorem, we conduct all computations of  convolution in the frequency domain.  This can be also applied in a restricted domain. For example, let $\Lambda$ be a smaller region in the frequency domain, and the matrix $\bF_{\Lambda}$ and the vector $ \bb_{\Lambda}$ be restricted to the entries from the region $\Lambda$.
We let  $(\xi_x,\xi_t) \in \Lambda$, and $h$ be the associated index such that  
{$h=\mathcal{H}(\xi_x,\xi_t)$, which is also the row index of the frequency mode $(\xi_x, \xi_t)$ in $\mathcal{S}(\bF)$.}
We define the smoothing operator $\mathcal{S}$ on the matrix $\bF_{\Lambda}$ as follows:
\begin{align}\label{e:denoisingS}
& [\cS({\bF})_{\Lambda}]_{h,l} \\
= &\left( \frac{2\pi \xi_x \sqrt{-1}}{X}  \right)^{\alpha_l}\left(\sum_{p=0}^{\mbNx-1} \sum_{q=0}^{\mbNt-1} e^{-\left(\frac{2\pi p}{\mathbb{N}_x}\xi_x + \frac{2 \pi q}{\mathbb{N}_t}\xi_t \right)\sqrt{-1}}
\left(U_{noise,p}^q\right)^{\beta_l} \right)  \cdot \left(\sum_{p=0}^{\mathbb{N}_x-1} \sum_{q=0}^{\mathbb{N}_t-1} e^{-\left(\frac{2\pi p}{\mathbb{N}_x}\xi_x + \frac{2 \pi q}{\mathbb{N}_t}\xi_t \right)\sqrt{-1}}\Phi_p^q\right) , \nonumber
\end{align}
where $(\xi_x,\xi_t) \in \Lambda$, and $h$ and $l$ are row and column index of the smoothed matrix $\cS(\bF)_{\Lambda}$,
\[
\phi(x,t) = \left(1 - \left(\frac{x}{m_x\Delta x}\right)^2\right)^{p_x}\left(1 - \left(\frac{t}{m_t\Delta_t}\right)^2\right)^{p_t} 
\text{for } (x,t) \in (-m_x\Delta x, m_x\Delta x) \times (-m_t\Delta t, m_t\Delta t)
\]
and $\Phi_p^q\;\;(p = 0,...,\mathbb{N}_x-1, q = 0,...,\mathbb{N}_t-1)$ gives a  discrete evaluation of $\phi(x,t)$ on the grid point $(x_p, t^q)$. 
The  operation with $\phi(x,t)$ in \eqref{e:denoisingS} smoothes the data, because the point-wise multiplication between $\Phi$ and $U$ in the frequency domain is equivalent to the convolution between $\phi$ and $u$ in the physical domain. 
This denoising process is motivated by the weak form of feature as in \cite{messenger2021weak, tang2023weakident}, and we apply the same method to determine the nonnegative integers $m_x, m_t, p_x, p_t$ related with the decay of the test function $\phi$, from the given data. The idea is to match the shape of the smoothing kernel $\phi$ to the shape of a Gaussian function, such that the noise region will be mostly in the tail of the Gaussian.  
Another aspect is the smoothness of $\phi$ that $m_x, m_t, p_x, p_t$ are chosen to guarantee all the features in the dictionary,   including the highest derivative terms, are at least continuous.

\subsection{The meaningful data region $\Lambda$ in the frequency domain}
\label{ss:meaningfulRegion}

We separate the frequency domain into the meaningful data region and the core regions of features for further noise separation.  We first utilize a theoretical bound for the decay of Fourier coefficients to define a meaningful data region $\Lambda$. 

\begin{lemma}\label{L:decayrate}
Let
$u \in L^2([0,1])$ be continuous
and denote its Fourier transform by $\cF(u)$ with  
$\cF(u)[\xi] =  \int_0^{1} u(x) e^{-2\pi \sqrt{-1}\xi x}dx$.
If $u^{(p-1)} = \frac{\partial^{p-1}u}{\partial x^{p-1}}$ is continuous and  $u^{(p-1)} \in L^2([0,1])$,
then the Fourier coefficients $\cF(u^{(q)})$ of  the $q$-th  ($q \leq p$) order derivative $\frac{\partial^q u}{\partial x^q}$ satisfies 
\begin{equation}
\cF(u^{(q)})[\xi] \propto \frac{1}{\xi^{p+1 - q}}, \label{e:decayrate}
\end{equation}
where $\propto$ denotes the proportional relation. 
\begin{proof}
It is shown in \cite{trefethen2000spectral}  
that  the Fourier coefficients of $u$ satisfies $\cF(u)[\xi] \propto \frac{1}{\xi^{p+1}}$. 
    Since $\cF(u^{(q)})[\xi] = (2\pi\sqrt{-1} \xi)^q \cF(u)[\xi]$, we obtain that $\cF(u^{(q)})[\xi]  \propto \frac{1}{\xi^{p+1 - q}}$.
\end{proof}
\end{lemma}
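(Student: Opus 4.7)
The plan is to invoke the well-known relation between smoothness of a periodic function and the decay of its Fourier coefficients, and then to exploit the derivative property of the Fourier transform. These are both standard, and indeed the proof sketched in the excerpt follows exactly this route, so I would fill in the short gap.

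First, I would cite (as the excerpt does) the classical result from Trefethen's \emph{Spectral Methods in MATLAB} that, under the stated hypotheses on $u$ (namely $u,u',\ldots,u^{(p-1)}$ continuous and $u^{(p-1)}\in L^2([0,1])$, viewing $u$ as a $1$-periodic function), the Fourier coefficients of $u$ satisfy the decay estimate
\[
\cF(u)[\xi] \propto \frac{1}{\xi^{p+1}} \quad\text{as}\quad |\xi|\to\infty.
\]
If I wanted to give this step self-containedly, I would sketch the standard integration-by-parts argument: each integration by parts in $\int_0^1 u(x)e^{-2\pi\sqrt{-1}\xi x}\,dx$ gains a factor $1/(2\pi\sqrt{-1}\xi)$ and, by periodicity, kills the boundary terms, so that $p$ integrations by parts yield a factor $\xi^{-p}$ together with the Fourier coefficient of $u^{(p)}$; since $u^{(p-1)}\in L^2$ implies the remaining Fourier coefficient is bounded by a Riemann--Lebesgue-type argument (giving an additional $o(1)$ or an extra $1/\xi$ factor from the next level of regularity), one obtains the $1/\xi^{p+1}$ rate.

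Next, I would apply the differentiation property of the Fourier transform: for a $q$-times (weakly) differentiable periodic function, integration by parts $q$ times gives
\[
\cF(u^{(q)})[\xi] = (2\pi\sqrt{-1}\,\xi)^q\,\cF(u)[\xi],
\]
with no boundary contributions because of periodicity. Combining this identity with the decay estimate above yields
\[
\cF(u^{(q)})[\xi] \propto \xi^q \cdot \frac{1}{\xi^{p+1}} = \frac{1}{\xi^{p+1-q}},
\]
which is exactly \eqref{e:decayrate}.

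There is no real obstacle here: the lemma is essentially a one-line consequence of two textbook facts. The only subtle point worth noting is that the hypothesis $u\in L^2([0,1])$ must be read in the periodic sense so that the integration-by-parts boundary terms vanish at $0$ and $1$; without periodicity one would have to keep boundary contributions that spoil the decay. Assuming, as the paper does, that $u$ is a periodic function (as discussed in Subsection~\ref{s: denoising a feature} and the boundary-extension discussion referenced there), the argument closes immediately.
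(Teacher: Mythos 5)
Your proposal is correct and follows essentially the same route as the paper's proof: cite the classical decay estimate $\cF(u)[\xi]\propto \xi^{-(p+1)}$ from Trefethen and combine it with the identity $\cF(u^{(q)})[\xi]=(2\pi\sqrt{-1}\,\xi)^q\cF(u)[\xi]$. The extra remarks on integration by parts and periodicity are sound but not needed beyond what the paper already does.
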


Following Lemma \ref{L:decayrate}, we  assume that PDE solution satisfies  
$
\cF(u)[\xi_x,\xi_t] \propto \frac{1}{\xi_x^{c_1}}$, and $ \cF(u)[\xi_x,\xi_t] \propto \frac{1}{\xi_t^{c_2}}
$
for some  $c_1, c_2 >0$.
Using this decay rate, we  partition the frequency domain into two regions. One region follows a proper decay rate which gives the meaningful data region $\Lambda$ for differential equation identification, and the other high frequency region indicates the noise from the given data $\mathcal{D}$.

For simplicity, we use $U \in \mathbb{C}^{N_x \times N_t}$ to denote the given data. We assume the data are periodic in both time and space by the augmentation along the temporal domain in Section \ref{ss: boundary extension}.
After augmentation, the spatial and temporal dimensions are updated to $N_x = \mathbb{N}_x$ and $N_t = 2 \mathbb{N}_t - 2$. 
   
We take the two-dimensional Fourier Transform of the noisy data and  accumulate the  responses in the $\xi_x$ and $\xi_t$ dimension (frequency index $\xi_x$ and $\xi_t$) respectively, and exploit the same decay rate in both dimensions:
\begin{equation*}
    \sum_{\xi_t = 0}^{N_t-1}\big|\mathcal{F}(U)[\xi_x,\xi_t]\big| \propto \frac{1}{\xi_x^{c_1}}, \quad 
      \sum_{\xi_x = 0}^{N_x-1}\big|\mathcal{F}(U)[\xi_x,\xi_t]\big| \propto \frac{1}{\xi_t^{c_2}}.
\end{equation*}
This can be written in a linear form using the log function:
\begin{equation}
    \log\left(\sum_{\xi_t = 0}^{N_t-1}\big|\mathcal{F}(U)[\xi_x,\xi_t]\big|\right) \propto \;\;  -c_1\log(\xi_x) + d_1.
    \label{e:decay_logform}
\end{equation}

We find a transition frequency mode  ${a_x}$ separating two regions (meaningful vs noise), by fitting a linear relation for low frequency modes ($\xi_x\leq a_x$), and fitting a flat line for high frequency modes ($\xi_x>a_x$) in the $\log$-$\log$ scale: 
\begin{align}
    &a_x^* = \argmin_{{a_x}\in \{0,..., \lfloor{N_x/2} \rfloor\}} \Gamma(a_x), \nonumber\\
    &\Gamma(a_x)=
    \left(
    \min_{a, b \in \mathbb{R}} \sum_{\xi_x \in \{ 0,1,...,{a_x}\}}\left(a\log(\xi_x)+b -{ \log[y(\xi_x)]}\right)^2
    \right) + 
    \sum_{\xi_x = {a_x}+1}^{ \lfloor{N_x/2} \rfloor}\left(\log[\bar{y}({a_x}+1)] - \log[y(\xi_x)]\right)^2
    ,
    \label{e: cost fun of decay fit}
\end{align}
with
\begin{align*}
&y(\xi_x) = \sum_{\xi_t = 0}^{N_t-1}\big|\mathcal{F}(U)[\xi_x,\xi_t]\big|,\quad \bar{y}({a_x}+1)=\frac{1}{\lfloor{N_x/2} \rfloor-{a_x}}\sum_{\xi_x={a_x}+1}^{\lfloor{N_x/2} \rfloor}\sum_{\xi_t = 0}^{N_t-1}\big|\mathcal{F}(U)[\xi_x,\xi_t]\big|.
\end{align*}

The first term in \eqref{e: cost fun of decay fit}  fits the decay rate \eqref{e:decay_logform} and the second term fits the flat noise region. 
This is based on the assumption that low frequency data are 
dominated by the actual dynamics of the differential equation, while the high frequency data are mostly 
dominated by noise. 
Thus, a transition frequency mode can be detected by the change  of the relation between $\log\left(\sum_{\xi_t = 0}^{N_t-1}\big|\mathcal{F}((U))[\xi_x,\xi_t]\big|\right)$ and $\log(\xi_x)$. 
The transition frequency mode in time, denoted by ${a_t^*}$, can be obtained similarly.

\begin{figure}[t!]
    \centering
    \begin{tabular}{cc}
    (a) & (b) \\
      \includegraphics[width = 0.5\textwidth]{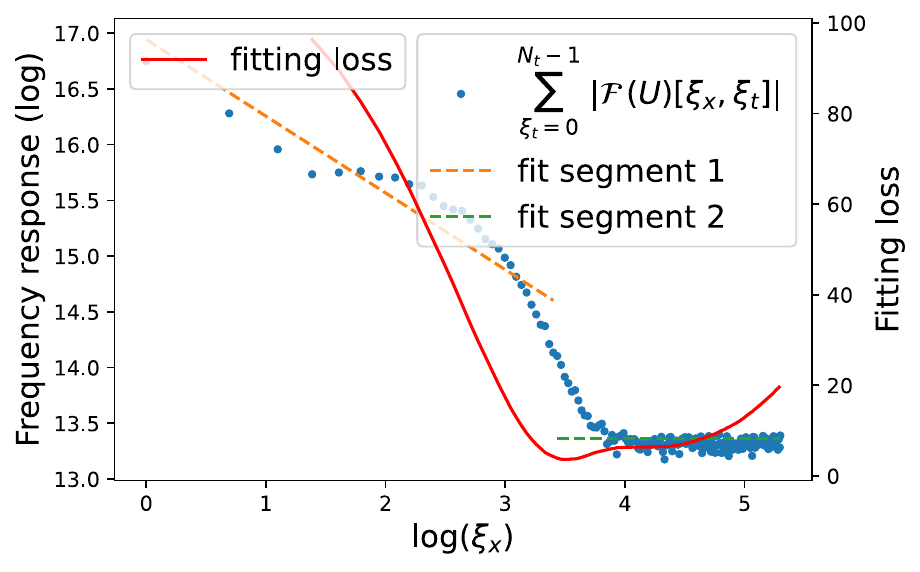}  &
      \includegraphics[width = 0.5\textwidth]{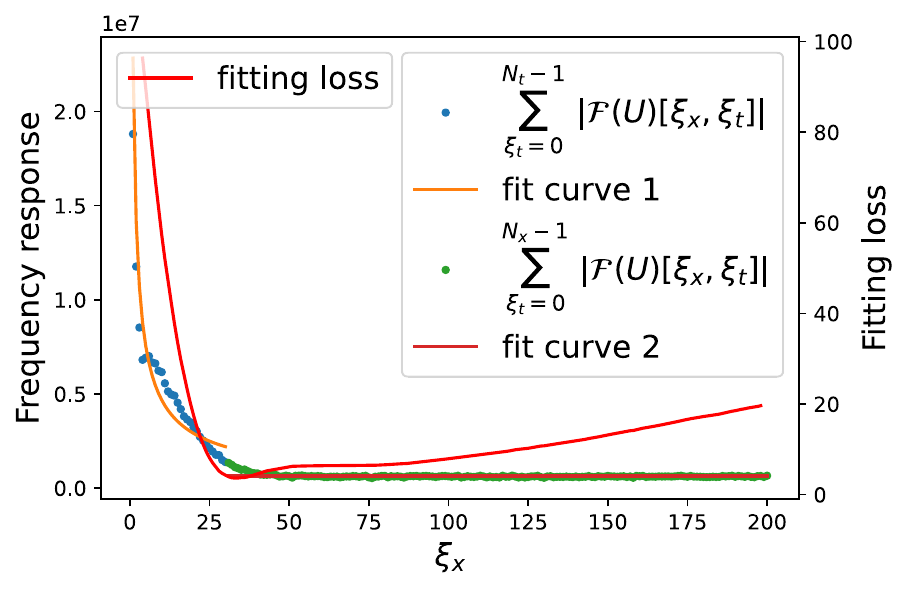} \\
    \end{tabular}
    \caption{Decay fit in the frequency and physical domains. (a) Blue dots are the given data, orange and green dotted lines are the fitting lines to find the transition frequency mode $a_x^*$ \eqref{e: cost fun of decay fit} in log-log scale.  
(b) the same plot in physical domain and original unit.  Red curves in both graphs show the fitting loss. 
}
    \label{fig: decay fit}
\end{figure}

In Figure \ref{fig: decay fit} (a) we show the accumulated frequency responses (blue dots), the linear fitting curve at the low-frequency modes (dash lines), and the total loss function $\Gamma$ of \eqref{e: cost fun of decay fit} (red lines). 
The $x$-axis and left-$y$-axis  provide the scales of points  $(x_i,y_i)$ in the linear fit. The right-$y$-axis  provides the scale of the loss function $\Gamma$ in \eqref{e: cost fun of decay fit}.
In (b),  visualization of the fitted curves is shown in the original scale (instead of $\log$ scale) in the frequency domain.   
The transition mode is detected as  $a_x^* = 27$ from \eqref{e: cost fun of decay fit}.

With the transition frequency modes $a_x^* $ and $a_t^*$ in  $\xi_x$ and $\xi_t$ directions respectively, we partition the frequency domain into  two regions: the \textbf{meaningful data region} $\Lambda$ and noise regions, where  
\begin{equation*}  
  \Lambda = \{ (\xi_x,\xi_t): \xi_x = 0,1,...,a_x^*-1, \xi_t =  0,1,...,a_t^*-1 \}  
\end{equation*}
and the complement $\Lambda^{\complement}$ is the noise region.  This is motivated from the theoretical decay rate of the Fourier coefficient of $U$.

When the given data are noisy, each Fourier feature $\cF\{g_l\}$ is also affected by noise. The region with a high response of $\cF\{g_l\}$ usually contains more signal information than the region with a low response of $\cF\{g_l\}$.
We further define the \textbf{core region of feature} for $g_l$, denoted as $\mathcal{V}(g_l)$ such that: 
    \begin{align}
        &\mathcal{V}(g_l)=\left(\mathcal{V}_{\mathcal{R}}(g_l), \mathcal{V}_{\mathcal{I}}(g_l) \right) \label{e: v(gl)} \;\; \mbox{ with}\\
        &\mathcal{V}_{\mathcal{R}}(g_l)=\left\{h=\mathcal{H}(\xi_x,\xi_t):(\xi_x,\xi_t) \in \Lambda, \  |\mathcal{R}\left(\big[\cS (\bF)\big]_{ h, l
        }\right)|\geq \beta_{g_l} \right\}, \ \  \nonumber \\
        &\mathcal{V}_{\mathcal{I}}(g_l)=\left\{h = \mathcal{H}(\xi_x,\xi_t): (\xi_x,\xi_t)\in{\Lambda},\  |\mathcal{I}\left(\big[\cS (\bF)\big]_{ h, l
        }\right)|\geq \beta_{g_l} \right\}, \nonumber
    \end{align}
    where $\mathcal{R}(\cdot)$ and $\mathcal{I}(\cdot)$ denote the real part and imaginary part of its argument, respectively.
Here $\mathcal{V}(g_l)$ contains two ordered sets, in which the first set denotes the high response region for the real part of the Fourier feature for $g_l$, and the second set denotes the high response region for the imaginary part.

For the feature $g_l$,   
we pick the high responses among the frequencies in $\Lambda$ to be in the core region of feature $g_l$, and consider low responses as noise and remove them from the system \eqref{e: fc = b}.   We refer to Subsection \ref{s: choice of beta} for more details on how to choose the threshold $\beta_{g_l}$.
For the feature  $u_t$, 
we consider the smoothed frequency response restricted to $\Lambda$,  $\cS({\bb})_{\Lambda}$, and  its magnitude  at each frequency.  We  take the high response region as the core region determined by a threshold $\beta_{u_t}$. 
{Specifically, the core region of $u_t$, $\mathcal{V}(u_t)$, is defined similarly as $\mathcal{V}(g_l)$ in \eqref{e: v(gl)}, where $\beta_{g_l}$ is replaced by $\beta_{u_t}$ and $\mathcal{S}(\bF)_{h,l}$ is replaced by $\mathcal{S}(\bb)_{h}$}.

Using the core region of feature $u_t$, we construct the linear system  as

\begin{equation}    \overline{\cS({\bF})_{\mathcal{V}(u_t)}} \bc  = \overline{\cS({\bb})_{\mathcal{V}(u_t)}},
    \label{e: Fc = b on A u_t with phi}
\end{equation}
where 
\[
\overline{\cS({\bF})_{\mathcal{V}({u_t})}} = 
\begin{bmatrix}
    \mathcal{R}\left(\cS (\bF)_{  \mathcal{V}_{\mathcal{R}}(u_t)}\right)\\
    \mathcal{I}\left(\cS (\bF)_{  \mathcal{V}_{\mathcal{I}}(u_t)}\right)
\end{bmatrix}
\;\;
\text{and}\;\;
\overline{\cS({\bb})_{\mathcal{V}({u_t})}} = 
\begin{bmatrix}
    \mathcal{R}\left(\cS (\bb)_{  \mathcal{V}_{\mathcal{R}}(u_t)}\right)\\
    \mathcal{I}\left(\cS (\bb)_{  \mathcal{V}_{\mathcal{I}}(u_t)}\right)
\end{bmatrix}
.
\]
Here we use $\cS (\bF)_{  \mathcal{V}_{\mathcal{R}}(u_t)}$  to denote the submatrix of $\cS (\bF)$ with rows restricted to the ones indexed by ${  \mathcal{V}_{\mathcal{R}}(u_t)}$.
The overline denotes the vertical stacking of real and imaginary responses. This leads to a reduced linear system \eqref{e: Fc = b on A u_t with phi} that is defined only on the core region of the feature $u_t$.  We represent the coefficients identified within the core region of $u_t$ as 
\begin{equation}\bc_{\mathcal{V}(u_t)} = \rm{LeastSquare}(\widetilde{\cS(\bF)_{\mathcal{V}(u_t)}}, \widetilde{\cS(\bb)_{\mathcal{V}(u_t)}}).
\label{e: coefficient on u_t}
\end{equation} 
Here we use the error-normalized Fourier feature matrix  $\widetilde{\cS(\bF)_{\mathcal{V}(u_t)}}$ and the error-normalized dynamic variable $\widetilde{\cS(\bb)_{\mathcal{V}(u_t)}}$ 
defined in \eqref{e: least square fit eqn}  to  solve the least square problem in \eqref{e: coefficient on u_t} (details presented in Subsection \ref{ss: error normalization}). 

\section{Fourier features for Identifying differential equations (FourierIdent)}
\label{s: feature selection and coefficient recovery}
\begin{figure}[t!]
\centering
\begin{tikzpicture}[node distance=2cm]
\node (input) [smallblockpurple]{Construct the Fourier system \eqref{e: fc = b}, from the given data $\mathcal{D}$ \eqref{e: given data}.};
\node (featureprocess) [medianblockpurple][right of=input, xshift = 1.5cm]{\textbf{[Step 1]} \\ Fourier feature denoising and construct $\overline{\cS(\boldsymbol{F})_{\mathcal{V}(u_t)}}$ and  $\overline{\cS(\boldsymbol{b})_{\mathcal{V}(u_t)}}$ in \eqref{e: Fc = b on A u_t with phi}.};
\node (sp) [medianblockpurple][right of=featureprocess, xshift =2cm]{\textbf{[Step 2]} \\ For 
$k = 1,2,...,K$, apply SP and group trimming to obtain the support 
$\mathcal{A}_k$.}; 
\node (grouptrim) [medianblockpurple, right of=sp, xshift = 2cm]{\textbf{[Step 3]} \\ 
Chose the best sparsity $k^*$ as in \eqref{e:energyCoreRegion}, and use 
$\mathcal{V}^*$ in \eqref{e: coeff selection} to compute the coefficient $\bc_{\mathcal{V}^*}$. };
\node (picksupport) [smallblock_final, right of=grouptrim,xshift = 1.2cm] {Output $\bc_{\mathcal{V}^*}$};
\draw [arrow] (input.east) --  (featureprocess);
\draw [arrow] (featureprocess) --  (sp);
\draw [arrow] (sp) --  (grouptrim);
\draw [arrow] (grouptrim) --  (picksupport);
\end{tikzpicture}
\caption{The flowchart of FourierIdent. 
A discrete Fourier system \eqref{e: fc = b} is constructed from the given data.   
In [Step 1], Fourier features are denoised and a smaller system \eqref{e: Fc = b on A u_t with phi} is constructed. %
In [Step 2], we apply Subspace Pursuit to obtain initial supports for each sparsity level $k = 1,2,..., K$ and new group trimming is applied.  
In [Step 3], the best coefficient is computed from the energy \eqref{e:energyCoreRegion} based on core region. } 
\label{F:flowchart}
\end{figure}
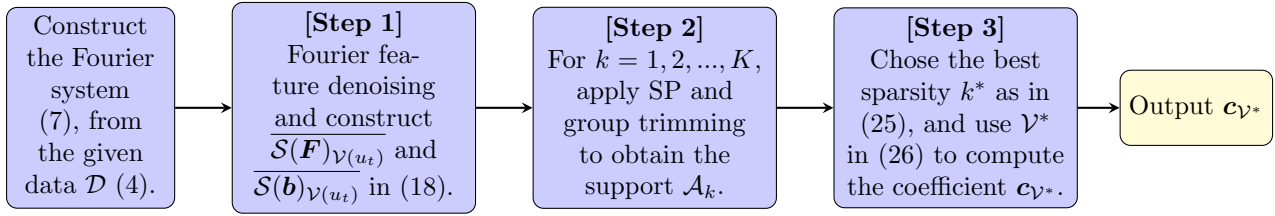

From the given data, Fourier features are denoised and a smaller  linear system with reduced size is constructed in Section \ref{s: denoising a feature}.  We utilize Subspace Pursuit \cite{dai2009subspace} for the coefficient support identification, and we propose (i) a new group trimming for stable support recovery, and (ii) a new energy based on the core regions of features for the  coefficient identification.
There are three steps to the proposed FourierIdent, and Figure \ref{F:flowchart} shows the flowchart to illustrate the process.
\begin{enumerate}
\item From the given input data, we construct a  denoised linear system with  $\overline{\cS(\boldsymbol{F})_{\mathcal{V}(u_t)}}$ and  $\overline{\cS(\boldsymbol{b})_{\mathcal{V}(u_t)}}$ in \eqref{e: Fc = b on A u_t with phi}.  This is generated by the denoising operation $\cS$  in \eqref{e:denoisingS}, the discrete Fourier system \eqref{e: fc = b} and using the core region  of $u_t$, $\mathcal{V}(u_t)$, to reduce the size and denoise the system. 

\item  For each sparsity level $k =1, 2, \dots, K$, we apply Subspace Pursuit \cite{dai2009subspace} on a column normalized system of $\overline{\cS({\bF})_{\mathcal{V}(u_t)}}  ^{\dagger}$, and  $\overline{\cS({\bb})_{\mathcal{V}(u_t)}} ^{\dagger}$ and get an initial support $\mathcal{A}_k^0$.  (Here $^{\dagger}$ denotes the column normalization, which each column is normalized independently.) 
We apply a new group trimming to remove features which minimally contribute to the dynamics of the system. This is applied iteratively, until the support converges to $\mathcal{A}_k$ 
for  each $k$. 

\item To choose the optimal coefficient, we propose a new energy based on the core regions of features.  
First, the optimal sparsity is determined by choosing $k^*$ which gives the minimum residual on the union of the core regions of features identified in $\mathcal{A}_k$.  Secondly, we identify the optimal coefficients, by finding the coefficients which gives the  minimum residual among least-square fits on the core region of each feature in $\mathcal{A}_k$.
\end{enumerate}

\begin{algorithm}[t!]
\caption{FourierIdent Algorithm} \label{alg: FourierIdent}
\begin{algorithmic}
\STATE \textbf{Input:} Given data \eqref{e: given data}.  
\STATE{[Step 1] Construct the discrete linear system $\overline{\cS({\bF})}_{\mathcal{V}(u_t)}, \overline{\cS({\bb})}_{\mathcal{V}(u_t)}$ as in \eqref{e: Fc = b on A u_t with phi}}.
\STATE{[Step 2]}\\
\FOR{k = 1,2,...,K}
\STATE{Run SP$(\overline{\cS({\bF}
)_{\mathcal{V}(u_t)}} ^{\dagger}, \overline{\cS({\bb}
)_{\mathcal{V}(u_t)}} ^{\dagger} ,k)$\, using SP \cite{dai2009subspace} and set $j=0$ \;} 
\STATE{
Compute the coefficients $\bc_{\mathcal{V}(u_t)}$ by the Least Square in \eqref{e: coefficient on u_t}\;}
\WHILE{$\mathcal{A}_k^j$ not convergent } 
\STATE{
Perform group trimming  in \eqref{e: group trim criteria} with  $\mathcal{T}=0.08$ 
and set $j=j+1$ \;} 
\STATE{
Update the contribution score $s_l$ in \eqref{e:contribution score}, using the updated  coefficients  $\bc_{\mathcal{V}(u_t)}$ in \eqref{e: coefficient on u_t}\;}
\ENDWHILE
\ENDFOR
\STATE{[Step 3 - 1] Determine the optimal sparsity $k^*$ according to \eqref{e:energyCoreRegion} and the support $\mathcal{A}_{k^*}$. 
}
\STATE{[Step 3 - 2] Compute $\bc^*$ 
by minimizing 
\eqref{e: c optimal}. 
}
\STATE{\textbf{Output:} $\boldsymbol{c}^* \in \mathbb{R}^L$ such that $\bF \boldsymbol{c}^*  \approx \bb$ .}
\end{algorithmic}
\end{algorithm}

\subsection{Subspace Pursuit (SP) and Group trimming in [Step 2]} 
\label{ss: group trimming}

With the denoised Fourier features, we apply Subspace Pursuit (SP) \cite{dai2009subspace} to obtain an initial support $A_k^0$ with each sparsity level $k$. The goal is to find a $k$-sparse solution of the linear system $\Phi \bx = \bb$:
\begin{align*}
    \min_{\bx: \|\bx\|_0=k} \|\Phi\bx-\bb\|_2^2. 
\end{align*}
One inputs a column normalized matrix $\Phi$, a vector $\bb$, and a sparsity $k$, and Subspace Pursuit ${\rm SP}(\Phi,\bb,k)$ outputs a $k$-sparse solution in a greedy way (See \cite{dai2009subspace} for details).  

For each sparsity level $k = 1, \dots, K$, we compute ${\rm SP}(\overline{\cS({\bF})_{\mathcal{V}(u_t)}}  ^{\dagger},\overline{\cS({\bb})_{\mathcal{V}(u_t)}} ^{\dagger},k)$, and denote the recovered support as $A^0_k$.
To further remove the insignificant features in $A_k^0$, we trim the features which have minimum contributions to the dynamics, measured by the \textit{contribution score};
\begin{equation} 
    s_l= s(g_l) = |c_l| \cdot \| \overline{\mathcal{S}(\bF)_{\mathcal{V}(u_t),l}}\|_2 \;.
    \label{e:contribution score}
\end{equation}
{Here $  \overline{\mathcal{S}(\bF)_{\mathcal{V}(u_t),l}}$ is  
the $l$-th column of $ \overline{\mathcal{S}(\bF)}$ with rows restricted to the core region $\mathcal{V}(u_t)$. } 
The value $c_l$ is the $l$-th entry of the identified coefficient from 
the least-square fit of  
$\overline{\cS( {\bF})_{\mathcal{V} (u_t)}}$  and $\overline{\cS( {\bb})_{\mathcal{V} (u_t)}}$.

To improve the efficiency of the algorithm,  we propose to {\bf trim features as a group}. We remove a set of the least relevant features for a given sparsity level $k$ using a threshold $\mathcal{T}$.  In this paper, we fix this threshold to be  $\mathcal{T} =0.008$.  The group-trimming is used as follows:
\begin{itemize}
    \item \textbf{Re-order features by contribution scores.}  We arrange the features in ascending order, denoted as $1', 2', ..., k'$, based on their respective contribution scores, represented as $s_{1'}\leq s_{2'}\leq ...\leq s_{k'}$.
    \item \textbf{Remove the low contributing terms as a group.}  We identify the largest value $k_{\max}'$ satisfying condition 
    \begin{equation}
    \frac{\sum_{j = 1'}^{k_{\max}'}s_{j}}{\sum_{k \in  \mathcal{A}_k^0} s_{k}}
     < \mathcal{T}  \leq \frac{\sum_{j = 1'}^{k_{\max}' + 1} s_{j}}{\sum_{k \in  \mathcal{A}_k^0} s_{k}},
    \label{e: group trim criteria}
\end{equation} 
and remove features with contribution scores equal to or lower than $s_{k_{\max}'}$, i.e., $s_l \leq s_{k_{\max}'}$ from the support set $\mathcal{A}_k^i$ when $k_{\max}' > 0$, 
where $\mathcal{A}_k^i$ denotes the support at the $i$th iteration after the $i$th group trimming.  
This step reduces the size of the support and the support becomes $\mathcal{A}_k^{i+1}$. 
Here $\mathcal{T}$ helps to remove a subgroup of features that contributes less than a threshold (which is set to be 8\% in this paper) 
overall  compared to all features in the  support 
$\mathcal{A}_k^i$. 
This $\mathcal{T}$ is a measure of the relative significance of a smaller subgroup of trimmed features compared to the whole set of features.
\item \textbf{Trimming in each iteration. } This group trimming is applied iteratively within each sparsity level $k$, and the converged support set is assigned as $\mathcal{A}_k$.  Note that equation \eqref{e: group trim criteria} changes in each iteration and must be recomputed. 
\end{itemize}

The group trimming is a grouped version of trimming introduced in WeakIdent \cite{tang2023weakident}. 
The group trimming improves efficiency by removing insignificant terms as a group and gives stable results for different types of equations with various levels of noise.

\subsection{ Identification of coefficients from the core regions of features in [Step 3]}
\label{ss: support selection active modes}
Once we have a collection of support sets $\mathcal{A}_k$'s for  $k=1,\dots, K$, we choose the optimal sparsity $k^*$ (and the associated support $\mathcal{A}_{k^*}$) which minimizes an energy based on the core regions of features.  This energy consists of two terms: 
\begin{align}
\text{ Energy } 
& = \underbrace{\text{Fitting residual 
    on the union of core regions
    in } \mathcal{A}_k}_{\mathcal{E}_1} + \underbrace{\text{ Stability of the identified coefficient}}_{\mathcal{E}_2}. 
\label{e:energy} 
\end{align}
The first term $\mathcal{E}_1$ is the fitting residual at the frequencies on the union of the core regions for the features in $\mathcal{A}_k$:
\begin{equation}
    \mathcal{E}_1 
     := \frac{\|{\mathcal{S}(\bF)_{\mathcal{V}({\mathcal{A}_k})}} {  {\bc_{\mathcal{V}({\mathcal{A}_k})} }} -  {\mathcal{S}(\bb)_{\mathcal{V}({\mathcal{A}_k})}} \|_2}{\|{\mathcal{S}(\bb)_{\mathcal{V}({\mathcal{A}_k})}}\|_2}.
    \label{e: support loss 1}
\end{equation}
Here ${\bc_{\mathcal{V}({\mathcal{A}_k})} }$ denotes the coefficients recovered using  an error-normalized  feature matrix  $\widetilde{\cS(\bF)_{\mathcal{V}(u_t)}}$ and dynamic variable $\widetilde{\cS(\bb)_{\mathcal{V}(u_t)}}$ defined in \eqref{e: least square fit eqn} in Subsection \ref{ss: error normalization}. 
The union of core regions for the features in $\mathcal{A}_k$ is
{
\[
\mathcal{V}(\mathcal{A}_k) = \bigcup_{l \in \mathcal{A}_k} \mathcal{V}(g_l). 
\]} 
While each feature gives different core regions (high response regions), by using the union $\mathcal{V}(\mathcal{A}_k)$, 
we consider fitting on the high response regions for all the features in $\mathcal{A}_k$

The second term in \eqref{e:energy} measures the stability of the identified  coefficients.  For each feature $g_l$ such that $l\in \mathcal{A}_k$, we compute the coefficients $\bc_{\mathcal{V}(g_l)}$ as in \eqref{e: coefficient on u_t} while the core region $\mathcal{V}(u_t)$ is replaced by $\mathcal{V}(g_l)$.   We compare the normalized  distance among all  coefficient vectors computed on the core region of each feature in $\mathcal{A}_k$, and define the follwing stability term:
\begin{equation}
 \mathcal{E}_2 : =  
    \frac{1}{\big|\mathcal{A}_{k} \big|^2} 
    \frac{1}{\big\|\bc_{\mathcal{V}(u_t)} \big\|_2}
    \sum_{l, l' \in \mathcal{A}_k, l \neq l'}  
\big\|\bc_{\mathcal{V}(g_l)} - \bc_{\mathcal{V}(g_{l'})}\big\|_2.
    \label{e: support loss 2}
\end{equation}
If fitting on the core region of one feature gives a very different coefficient vector from fitting on the core region of another feature, this stability term in \eqref{e: support loss 2} will be larger than when the  coefficients computed on the core regions of different features are similar.  This term measures the stability of coefficient computation on the core regions of different features in $\mathcal{A}_k$. 

We find the optimal sparsity $k^*$ and the associated support set $\mathcal{A}_{k^*}$ by minimizing the energy $\mathcal{E}_1 + \mathcal{E}_2$:
\begin{equation}
k^*= \argmin_k 
\left\{ \frac{\|{\mathcal{S}(\bF)_{\mathcal{V}({\mathcal{A}_k})}} { \bc_{\mathcal{V}({\mathcal{A}_k})}} -  {\mathcal{S}(\bb)_{\mathcal{V}({\mathcal{A}_k})}} \|_2}{\|{\mathcal{S}(\bb)_{\mathcal{V}({\mathcal{A}_k})}}\|_2}
+      
     \frac{1}{\big|\mathcal{A}_{k} \big|^2} 
    \frac{1}{\big\|\bc_{\mathcal{V}(u_t)} \big\|_2}
    \sum_{l, l' \in \mathcal{A}_k, l \neq l'}  
\big\|\bc_{\mathcal{V}(g_l)} - \bc_{\mathcal{V}(g_{l'})}\big\|_2\right\}.
    \label{e:energyCoreRegion}
\end{equation}
The energy summing \eqref{e: support loss 1} and \eqref{e: support loss 2}  measures how meaningful the recovered support $\mathcal{A}_k$ is in terms of 
the best fitting on the union of core regions and the stability of coefficient identification.  

Finally, we compute the best coefficient vector $\bc^*$ by
solving a least square problem on a properly selected core region, that gives the smallest residual  among the features in $\mathcal{A}_{k^*}$ and $u_t$.
For each core region of each features in $\mathcal{A}_{k^*}$ and $u_t$, residual error is computed.  
Let $\mathcal{V}^*$ be the best core region of features among all features in $\mathcal{A}_{k^*}$ and $u_t$, which gives the minimum residual:
\begin{equation}
    \mathcal{V}^* = \argmin_{V \in \big\{\mathcal{V}(u_t), \mathcal{V}({g_l}) : l \in \mathcal{A}_{k^*} \big\} }\frac{\|\mathcal{S}(\bF)_{V} \bc_{V} - \mathcal{S}(\bb)_{V}\|_2}{\|\mathcal{S}(\bb)_{V}\|_2}.
    \label{e: coeff selection}
\end{equation}
The core regions for each feature in $\mathcal{A}_{k^*}$ and $u_t$ are  used to find the minimum residual in \eqref{e: coeff selection} such that $V \in \big\{\mathcal{V}(u_t), \mathcal{V}({g_l}) : l \in \mathcal{A}_{k^*} \big\}$. In our experiments, using the union $ \mathcal{V}(\mathcal{A}_{k^*})$ does not give good results. When  fewer number of features are used for the core region, it  gives better coefficient recovery. In \eqref{e: coeff selection}, we experiment on the core regions for each feature, and pick one feature to define the core region.  

With $\mathcal{V}^*$,  \textit{the identified coefficient vector} $\bc^*$ is given by 
\begin{equation}
    \bc^* = \rm{ LeastSquare}( \widetilde{\cS(\bF)_{\mathcal{V}^*}}, \widetilde{\cS(\bb)_{\mathcal{V}^*}})
    \label{e: c optimal}
\end{equation}
{using error-normalized feature matrix $\widetilde{{\cS(\bF)_{\mathcal{V}^*}}}$ and dynamic variable $\widetilde{{\cS(\bb)_{\mathcal{V}^*}}}$  as defined in \eqref{e: least square fit eqn}.}

\section{Numerical Implementation Details}
\label{s: numerical implementation details}
We present implementation details in this section.  First, we propose a method to extend and augment the data when the given data are not periodic. Secondly, we discuss the details of how the threshold is computed for the core regions of features.  Thirdly, we review the details of error-normalization on how it is applied in the frequency domain.  

\subsection{Domain extension for different boundary conditions}
\label{ss: boundary extension}
Applying the Fourier Transform on $u$ requires $u$ to be periodic along both $t$ and $x$ directions. When $u(x,t)$ does not satisfy this periodic condition, we extend it to a periodic function to compute Fourier coefficients. 
 
We introduce two transformation operators $\mathcal{H}_t$ and $\mathcal{G}_t$ defined as follows:
$${\mathcal{G}_{t}(f_l(u(x,t)))} = 
\begin{cases}
f_l(u(x,t)) & 0 \leq x \leq X, 0 \leq t \leq T,\\
-f_l(u(x,2T - t)) & 0 \leq x \leq X, T \leq t \leq 2T, \\
\end{cases}
$$ 
and
$${\mathcal{H}_t(u(x,t))} = 
\begin{cases}
u(x,t) & 0 \leq x \leq X, 0 \leq t \leq T,\\
u(x,2T - t) & 0 \leq x \leq X, T \leq t \leq 2T, \\
\end{cases}$$
where $u(x,t)$ is a continues function for $x \in [0, X]$ and $t \in [0, T]$. The subscript $t$ denotes that we are extending $u(x,t)$ along $t$. One can easily check that both extended functions ${\mathcal{G}_t(u(x,t))}$ and ${\mathcal{H}_t(u(x,t))}$ are periodic along $t$. Then the equation \eqref{eq.pde} is converted to an alternative form 
\begin{align}
	\frac{\partial \mathcal{H}_t(u) }{\partial t}(x,t)=\sum_{l=1}^L c_l \frac{\partial^{\alpha_l} \mathcal{G}_{t}(f_l(u))}{\partial x^{\alpha_l}},
 \label{e: pde alternative}
\end{align}
where $u_t$ is computed with $\mathcal{H}_t(U_{noise, i}^n)$ instead of $U_{noise, i}^n$ and 
features on the right-hand side are computed using $\mathcal{G}_t(f_l(U_{noise,i}^n))$.
For simplification of notation, we present the paper with  $U_{noise, i}^n$. However,  when the given data do not satisfy the period boundary condition, we consistently use the extended boundary equation \eqref{e: pde alternative} instead of \eqref{eq.pde}.

\subsection{Threshold computation for the core region of features}
\label{s: choice of beta}

We present how the  threshold $\beta_{u_t}$ 
used in Subsection \ref{ss:meaningfulRegion} is chosen to determine the core region of feature for $u_t$.  
In  Subsection \ref{ss:meaningfulRegion}, we  partition the meaningful data region $\Lambda$ further into 
the core region (high response region of the feature), 
and the noise region (low response region of the feature).  The partition is based on the threshold $\beta_{u_t}$.
\begin{enumerate}
\item First, we collect absolute values of the smoothed responses for   $u_t$  in $\Lambda$ in a new set denoted by $\mathcal{B}$.  We take the absolute values of both the real and imaginary parts 
for each index $h=\mathcal{H}(\xi_x,\xi_t)$. 
Here the frequency index $h = \mathcal{H}(\xi_x,\xi_t)$ is in the meaningful data region $\Lambda$ such that $\xi_x =1,\ldots, a_x^*$ and $\xi_t =1,\ldots, a_t^*$.

    \item We partition the range of 
    frequency responses in $\mathcal{B}$  into a fixed number of bins $N_{\mathcal{B}} = 300$. 
    The partition is on an equally spaced grid.
We denote the index of each bin by ${\theta}=1,...,N_{\mathcal{B}}$. 
Let
$b_{\theta}$ represent the number of Fourier responses located in the ${\theta}$th bin. 
In other words, each   $b_{\theta}$ counts the number of elements of $\mathcal{B}$ with values in $[b_{\theta}^{\rm left}, b_{\theta}^{\rm right}]$, where $b_{\theta}^{\rm left}, b_{\theta}^{\rm right}$ denote the lower and upper bound of the responses in the ${\theta}$th bin. 
    \item We apply a two-piece linear fit on the cumulative sums of these bins, denoted by $B(k) = \sum_{{\theta}=1}^k b_{\theta}$ for $k = 1,2,..., N_{\mathcal{B}}$. 
     The threshold $\beta_{u_t}$ is chosen as $b_{{\theta}^*+1}^{\rm left}$ 
    where ${\theta}^*$ is determined by the minimizer of the sum of two linear fitting residuals: \[
    \beta_{u_t} = b_{{\theta}^*+1}^{\rm left}\]
    where 
    \[ {\theta}^* = \argmin_{{\theta}}\bigg\{ \sum_{{\theta} = 2}^{k} \left(\frac{B(k)-B(1)}{k-1}{\theta} + B(1) \right)+ 
    \sum_{{\theta} = k+1}^{N_{\mathcal{B}}-1} \left(\frac{B(N_{\mathcal{B}})-B(k+1)}{N_{\mathcal{B}} - k -1}{\theta} + B(k+1) \right)
    \bigg\}.
\]
\end{enumerate}
For each feature $g_l$, the core region of feature is  computed with the threshold $\beta_{g_l}$. We choose the threshold $\beta_{g_l}$ in a similar way to  $\beta_{u_t}$.

\subsection{Error-normalization on the core region of features}
\label{ss: error normalization}

Whenever the coefficient vector is computed by least-squares, e.g., \eqref{e: coefficient on u_t}, \eqref{e:contribution score}, \eqref{e: support loss 1}, \eqref{e: support loss 2}, \eqref{e:energyCoreRegion}, \eqref{e: coeff selection}, \eqref{e: c optimal} error-normalization \cite{tang2023weakident} is used.  The motivation of error-normalization is to unify the effect of noise among different feature terms in the library.  Here we illustrate how error-normalization is implemented in the frequency domain.
 Using the noise model of the given data \eqref{e:u+eps},  each Fourier feature $g_l$  has the following expression
\begin{equation}\label{e:errorIN}
     \left( \frac{2\pi \xi_x}{X} \sqrt{-1} \right)^{\alpha}\sum_{i=0}^{\mbNx-1} \sum_{n=0}^{\mbNt-1} \Biggl\{e^{\displaystyle -\left(\frac{2\pi i}{\mathbb{N}_x}\xi_x + \frac{2 \pi n}{\mathbb{N}_t}\xi_t \right)\sqrt{-1}}
    \left(U_i^n +\epsilon_i^n \right)^{\beta} \Biggr\}
\end{equation}
at each $h=\mathcal{H}(\xi_x,\xi_t)$. 
We consider the maximum of $\epsilon_i^n$ as $\epsilon$, and expand \eqref{e:errorIN} in terms of  $\epsilon$, then the leading coefficient error of each feature $g_l$ becomes
\begin{equation}
\left( \frac{2\pi \xi_x}{X} \sqrt{-1} \right)^{\alpha_l}\sum_{i=0}^{\mbNx-1} \sum_{n=0}^{\mbNt-1} \Biggl\{e^{\displaystyle -\left(\frac{2\pi i}{\mathbb{N}_x}\xi_x + \frac{2 \pi n}{\mathbb{N}_t}\xi_t \right)\sqrt{-1}}
    \beta_l \left(U_i^n \right)^{\beta_l-1} \Biggr\}
    \label{e:leadingerrcoeff}
\end{equation}
when $\beta_l >1$.  Notice that this term is  the same as the feature terms with one lower $\beta_l$ degree, i.e., for feature term $g_l=\frac{\partial^{\alpha_l} u^{\beta_l}}{\partial x^{\alpha_l}}$ the leading coefficient error \eqref{e:errorIN} is the Fourier feature of $\frac{\partial^{\alpha_l} u^{\beta_l-1}}{\partial x^{\alpha_l}}$, the feature term of monomial of degree $\beta_l - 1$ and same derivative order $\alpha_l$. We define this feature index as $l^*$.
For $\beta_l=1$, the leading coefficient error is the feature itself deduced from \eqref{e:errorIN}. 

In the discrete form, we use the denoised form Fourier transform \eqref{e:denoisingS} and take the average for $h \in \mathcal{V}(u_t)$, and refer to as $s(l)$:
\begin{equation}
\begin{aligned}
   s(l) = \begin{cases}
       \beta_l \text{Avg}_{h \in {\mathcal{V}(u_t)}} | \overline{\mathcal{S}(\bF)_{\mathcal{V}(u_t)}}|_{h, l^*},    &   \beta_l > 1, \text{ with index } l^* \text{ representing }  (\alpha_l, \beta_l - 1) \\
    \text{Avg}_{h \in {\mathcal{V}(u_t)}} |\overline{\mathcal{S}(\bF)_{\mathcal{V}(u_t)}}|_{h, l},
    &  \beta_l = 1,  \text{ with index } l \text{ being }  (\alpha_l, \beta_l)
    \end{cases}
\end{aligned}
\label{e: s l }
\end{equation}
where $\overline{|\mathcal{S}(\bF)|}$ represents the magnitude of the smoothed feature matrix with a vertical stacking of the real and imaginative features \eqref{e: Fc = b on A u_t with phi}. 
Here ${\rm Avg}_{h \in \mathcal{V}(u_t)}$ denotes the average operation  over $h \in \mathcal{V}(u_t)$, and $l$, and $l^*$ denote the column index of a feature with both derivative order $\alpha_l$ and polynomial degree $\beta_l$, and $\beta_l - 1$ (as defined in (\ref{e:errorIN}) and (\ref{e:leadingerrcoeff})), respectively. 
Similarly, the scale constant for $\overline{\mathcal{S}({\bb})_{\mathcal{V}(u_t)}}$ is 
\begin{equation}
 s(b) = \text{Avg}_{h \in \mathcal{V}(u_t)} \overline{|\mathcal{S}(\bb)_{\mathcal{V}(u_t)}|}_{h}.
 \label{e: s b}
\end{equation}
Using these scales, the error-normalization is to use  \begin{equation*}
    \widetilde{\mathcal{S}({\bF})_{\mathbb{A}}} = \overline{\mathcal{S}({\bF})_{\mathbb{A}} }\cdot {\rm diag}\left(\frac{1}{s(1)},...,\frac{1}{s(L)}\right), \; \quad \text{ and } \; \quad 
    \widetilde{\mathcal{S}({\bb})_{\mathbb{A}}} = \overline{\mathcal{S}( {\bb})_{\mathbb{A}}} \cdot \frac{1}{{s(b)}}
\end{equation*}
to construct the rescaled system:
\begin{equation}
     \widetilde{\mathcal{S}({\bF})_{\mathbb{A}}}  \widetilde{\bc} = 
     \widetilde{\mathcal{S}({\bb})_{\mathbb{A}}}. 
     \label{e: least square fit eqn}
\end{equation}
Note that this is for a general region $\mathbb{A}$ which can be the core region of feature $u_t$ of $g_l$, but the same scale terms $s(l)$ in \eqref{e: s l } and $s(b)$ in \eqref{e: s b} are consistently used. 
Once we compute the coefficient vector $ \widetilde{\bc}$, it is rescaled to the coefficient ${\bc}$ by  
\begin{equation}
     {\bc}=  
      \widetilde{\bc} \cdot 
    {\rm diag}\left(\frac{s(b)}{s(1)},...,\frac{s(b )}{s(L )}\right).
    \label{e: rescaled vector}
\end{equation}

\section{Numerical experiments}
\label{s: numerical experiments}
In this section, we present numerical experiments. The noise $\epsilon_i^n$ for $i = 0,...,\mathbb{N}_x-1$, $n = 0,...,\mathbb{N}_t-1$,  follows a Gaussian distribution with mean zero and variance $\sigma_{\rm Noise}^2$. For the noise-to-signal ratio (NSR), we use the following definition:
$$
\sigma_{\rm NSR} = \frac{ \sigma_{\rm Noise}}{\sqrt{\frac{1}{\mathbb{N}_t\mathbb{N}_x}  \sum_{{i},n}|{U}_i^n|^2}}.
$$ 

To measure the accuracy of identification, we use four identification errors in Table \ref {t: identification errors}. 
They are the Relative coefficient error $e_2$, Relative residual error $e_{res}$, True Positive Rate (TPR), and Positive Predictive Value (PPV).  Here $e_2$ measures the accuracy of the coefficient identification compared to the true coefficient; $e_{res}$ shows the relative residual error of fitting the identified differential equation to the given data; 
TPR provides the percentage of how many true features are identified compared to the total count of the true features; PPV provides the percentage of the true features identified as a result among all the identified features.

\begin{table}[t!]
    \centering
    \begin{tabular}{lc|lc}
    \toprule
    Name & Definition & Name & Definition \\
    \midrule
    $e_2$   &  \parbox{4cm}{\[ \frac{\|\bc-\bc_{\rm true}\|_2}{\|\bc\|_2} \]}  &
    TPR & \parbox{7cm}{\[ \frac{|\{l: c_{\rm pred}(l) \neq 0, c_{\rm true}(l) \neq 0 \}|}{|\{l: c_{\rm true}(l) \neq 0|}\]}\\
    $e_{\rm  res}$   &  \parbox{4cm}{\[\frac{\|\bF \bc - \bb \|_2}{\|\bb\|_2} \] }  &
    PPV & \parbox{7cm}{\[  \frac{|\{l: c_{\rm pred}(l) \neq 0, c_{\rm true}(l) \neq 0 \}|}{|\{l: c_{\rm pred}(l) \neq 0|}\]}\\
    \bottomrule
    \end{tabular}
    \caption{We use four errors to measure the accuracy of identifying a partial differential equation in this paper:  
    Relative coefficient error ($e_2$), Relative residual error ($e_{\rm res}$), True Positive Rate (TPR), and Positive Predictive Value (PPV). }
    \label{t: identification errors}
\end{table}

We experiment on various differential equations listed in Table \ref{t: equations}. 
The Heat equation \eqref{e: heat}, Transport equation \eqref{e: tran diff}, and Burgers' equation \eqref{e: burgers diff w diff} are simulated using the spectrum method. The KdV \eqref{e: kdv} and KS \eqref{e: ks} equation are simulated using the ETD RK4 method \cite{kassam2005fourth}.  The true features in these equations contains $u_x, (u^2)_x, u_{xx}, u_{xxx},$ and $u_{xxxx}$. A diffusion term is added to the transport and burger equation to stabilize the pattern and increase the complexity of the model. 
These experiments show the robustness of FourierIdent in identifying linear, nonlinear, or higher-order derivative features.

\begin{table}[t!]
    \centering
    \begin{tabular}{p{3cm}cp{6cm}}
    \toprule
    Name & Definition & Simulation paramters \\
    \midrule
    Heat Equation & \parbox{6cm}{\begin{equation} \frac{\partial u}{\partial t} = 0.1 \frac{\partial^2 u}{\partial x^2}    
    \label{e: heat} \end{equation}} 
    & $T = [0, 0.0999]$, $[X_1, X_2] = [0,10] $, $\Delta x = 0.0391$, $\Delta t = 0.003$.\\
    Transport Equation with diffusion & \parbox{6cm}{\begin{equation} \frac{\partial u}{\partial t} = - \frac{\partial u}{\partial x}  + 0.1 \frac{\partial^2 u}{\partial x^2}   
    \label{e: tran diff} \end{equation}}
    & $T = [0, 0.0999]$, $[X_1, X_2] = [0,10] $, $\Delta x = 0.0391$, $\Delta t = 0.003$. \\
        Burger's Equation with diffusion   &  \parbox{6cm}{\begin{equation} \frac{\partial u}{\partial t} = 0.25 \frac{\partial }{\partial x} u^2 + 0.05 \frac{\partial^2 u}{\partial x^2} 
    \label{e: burgers diff w diff} \end{equation}}
    & $T = [0,  0.4995]$, $[X_1, X_2] = [-3.1416,  3.1293]$, $\Delta x = 0.0123$, $\Delta t = 0.001$.\\
    Korteweg-de Vires (KdV) equation  & \parbox{6cm}{\begin{equation} \frac{\partial u}{\partial t} = - 0.5\frac{\partial}{\partial x}u^2 -     \frac{\partial^3 u}{\partial x^3} 
    \label{e: kdv} \end{equation}}
    & $T = [0,  0.0200]$, $[X_1, X_2] = [-3.1416, 3.1416]$, $\Delta x =  0.0157$, $\Delta t = 4 \times 10^{-5}$.\\
    Kuramoto-Sivashinsky (KS)  & 
    \parbox{6cm}{\begin{equation} \frac{\partial u}{\partial t} =- 0.5\frac{\partial}{\partial x}u^2  -     \frac{\partial^2 u}{\partial x^2} 
     - \frac{\partial^4 u}{\partial x^4} 
    \label{e: ks} \end{equation}}
    & $T = [0, 150]$, $[X_1, X_2] = [0,100.53]$, $\Delta x = 0.3927$, $\Delta t = 0.5$.\\
    \bottomrule
    \end{tabular}
    \caption{A list of equations used for experiments  of FourierIdent. For all the equations, we use the maximum power $\beta = 6$, maximum order of derivative $\alpha = 6$, and total number of features $L = 43$ as the dictionary's parameters. }
    \label{t: equations}
\end{table}

\subsection{Workflow of FourierIdent}\label{sec:flow}
We illustrate the procedure of FourierIdent step by step, from constructing a feature matrix to achieving the identified coefficients.

In [Step 1], we obtain the feature matrix  \eqref{e: Fc = b on A u_t with phi} and determine the core regions of features. 
Figure \ref{fig: vis Lambda region} shows an example of the KdV equation \eqref{e: kdv} with $\sigma_{\rm NSR}=0.3$ noise.  (a) shows the meaningful data region $\Lambda$ (red box) in relation to the entire frequency domain, (b) shows the  frequency response of $u_{xxx}$ in $\Lambda$ (zoomed), and (c) shows the core region of $u_{xxx}$ restricted in $\Lambda$.     
Note the significant reduction of the size contributes to the efficiency of the method in \eqref{e: coefficient on u_t}.
\begin{figure}[t!]
    \centering
    \begin{tabular}{ccc}
    (a) Frequency domain & (b) $\mathcal{F}(u_{xxx})$ on $\Lambda$ &  (c) $\mathcal{V}(u_{xxx})$ \\
     \includegraphics[width = 0.25\textwidth]{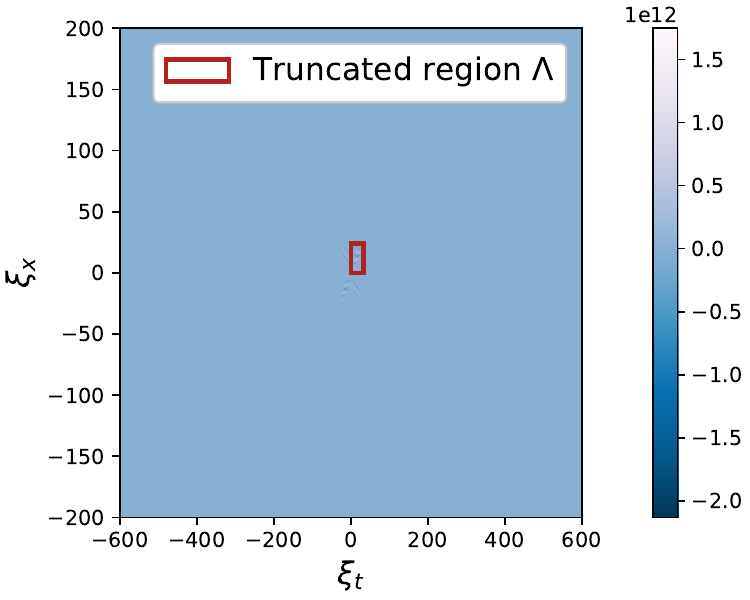}
     & \includegraphics[width = 0.25\textwidth]{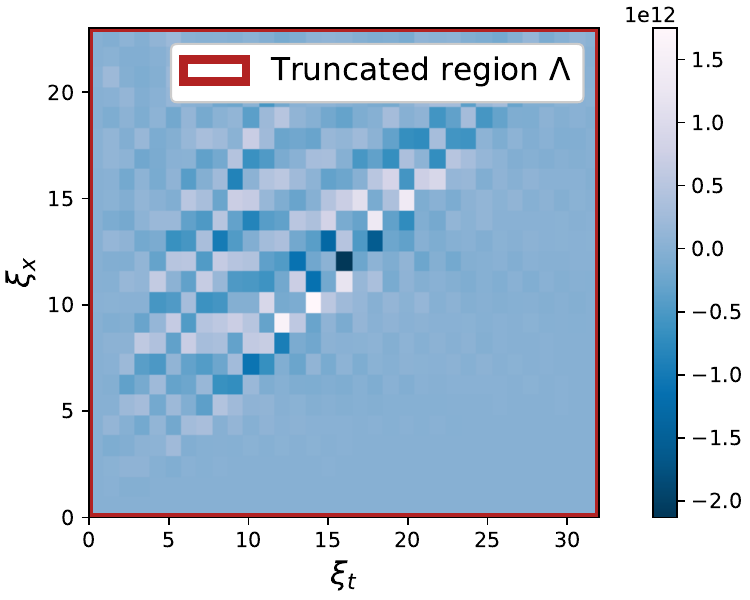}
     &  \includegraphics[width = 0.24\textwidth]{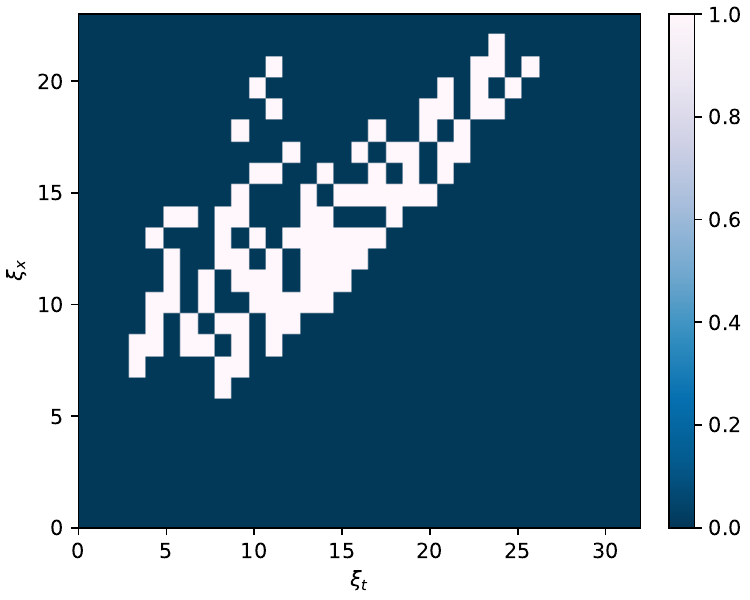}
    \end{tabular}
    \caption{The KdV equation \eqref{e: kdv} with $\sigma_{NSR}=0.3$ as an example. (a) The frequency domain and $\Lambda$ (the red box).  
    (b) Zoom of $\Lambda$ and the frequency response $\mathcal{R}(|\mathcal{F}(u_{xxx})|)$. 
    (c) The white region represents the core region of $u_{xxx}$ further reduced from $\Lambda$.  There is a big  reduction in the size of the region. 
}
    \label{fig: vis Lambda region}
\end{figure}

In [Step 2], for each sparsity level $k=1,\dots, K$, we use Subspace Pursuit to obtain an initial candidate support $\mathcal{A}_k^0$ which may be further reduced to a smaller support through group trimming.  Table \ref{t: example of group trimming} shows the first result of SP, $\mathcal{A}^0_k$, and how group trimming is iteratively applied to get $\mathcal{A}_k$ for the KS equation (\ref{e: ks}). We present the cases for $k\leq 5$, but we computed $\mathcal{A}_k$ until $k=10$. When $k = 1$, the candidate support has one feature $(u^2)_{xxxx}$, and the associated energy \eqref{e:energyCoreRegion} is 0.8071.  When $k = 3$, the correct support is found as the initial support $\mathcal{A}_3^0$, and no more feature is trimmed such that $\mathcal{A}_3= \mathcal{A}_3^0$. When $k = 4$,  the feature $(u^4)_{xxxx}$ is removed during group trimming. When $k=5$, the feature $(u^4)_{xxxx}, (u^2)_{xxx}$ are removed during group trimming. The same support was obtained after group trimming when $3 < k \leq 10$. 
In the end, optimal $k^*$ is given from 3, and the associated support is chosen to be $\mathcal{A}^* = \{ u_{xx}, u_{xxxx}, (u^2)_{x} \}$.

\begin{table}[t!]
    \centering
    \begin{tabular}{lp{10cm} c}
    \toprule
    Sparsity    &  Support 
    &  $k^*$ energy \eqref{e:energyCoreRegion} \\
    \midrule
    SP(1) & $\mathcal{A}_1^0 = \{ (u^2)_{xxxx}\} = \mathcal{A}_1$  & 0.8071 \\ \\
    SP(2) & $\mathcal{A}_2^0 = \{ (u^2)_{xxxx}, (u^2)_{xx}\} = \mathcal{A}_2$
    & 4.1957 \\ \\
    SP(3) & $\mathcal{A}_3^0 = \{ u_{xx}, u_{xxxx}, (u^2)_{x}\} = \mathcal{A}_3$ & 
   \textbf{0.1975} \\ \\
    SP(4) &   $\mathcal{A}_4^0 = \{ u_{xx}, u_{xxxx}, (u^2)_{x}, (u^4)_{xxxx}\}$ & \\ 
     \hspace{1cm} Score \eqref{e:contribution score} & $s(u_{xx})= 1,  s(u_{xxxx})= 0.59,  s((u^2)_{x})= 0.63$, \sout{$s((u^4)_{xxxx})= 0.01$}  & \\
     \hspace{1cm} Updated & $\mathcal{A}_4^1 = \{ u_{xx}, u_{xxxx}, (u^2)_{x}\} = \mathcal{A}_4$ & \textbf{0.1975}\\ \\
    SP(5) & 
    $\mathcal{A}_5^0 = \{u_{xx}, u_{xxxx}, (u^2)_{x}(u^2)_{xxx}, (u^4)_{xxxx}\}$ &\\
    \hspace{1cm} Score \eqref{e:contribution score} & $s(u_{xx})= 1,  s(u_{xxxx})= 0.59,  s((u^2)_{x})= 0.65$, &\\
    & \sout{$s((u^4)_{xxxx})= 0.04, s( (u^2)_x)=0.02$} & \\
    \hspace{1cm} Updated & $ \mathcal{A}_5^1 = \{u_{xx},u_{xxxx}, (u^2)_{x}\}= \mathcal{A}_5$
    & \textbf{0.1975} \\   
   \midrule
    \end{tabular}
        \caption{The KS equation in \eqref{e: ks} with $\sigma_{\rm NSR}=0.5$ as an example. 
        [Step 2] SP and group trimming. For each sparsity level $k$, SP($k$)  represents Subspace Pursuit applied on ${\cS(\bF)_{\mathcal{V}(u_t)}}^{\dagger}$, we show the initial support and the converged support $\mathcal{A}_k$ after the group trimming.  While no features are trimmed when $k \leq 3$, for $k>3$ with the group trimming, it converged in one step.  Note the same features $\mathcal{A}^*$ are found from $k=3$ with the minimum energy \eqref{e:energyCoreRegion}.} 
    \label{t: example of group trimming}
\end{table}

In [Step 3], we pick the coefficient vector with the minimum energy based on the core region of features \eqref{e:energyCoreRegion}.
Note that these coefficient energies are associated with each individual core region, and Table \ref{t: example of coefficient refine} presents this computation from the features of $\mathcal{A}^*$ and $u_t$, i.e., $\mathcal{V}(u_t), \mathcal{V}(u_{xx}), \mathcal{V}(u_{xxxx}), \mathcal{V}((u^2)_x)$, for the KS equation (\ref{e: ks}). 
The coefficient values are similar to each other, thanks to the consistency term \eqref{e: support loss 2} in \eqref{e:energyCoreRegion}, and we choose the best coefficient vector.  In this example, $\mathcal{V}^* = \mathcal{V}(u_{xxxx})$ gives the lowest coefficient energy. The final output of this identification example is   $u_t = - 0.9701 u_{xx} - 0.9916 u_{xxxx} - 0.4785 (u^2)_{x}$ and the true equation is  $u_t = -  u_{xx} - u_{xxxx} - 0.5 (u^2)_{x}$ in \eqref{e: ks}.

\begin{table}[t!]
    \centering
    \begin{tabular}{ll c}
    \midrule
    Features in $\mathcal{A}_k^*$ &  Coefficients  & Residual error \eqref{e: coeff selection}\\
    \midrule
    Use $\cS(\bF)_{\mathcal{V}(u_t)}$ & $\bc_{\mathcal{V}(u_t)} = [-0.8834, -0.881,-0.4443]$ & 0.2817 \\
    Use $\cS(\bF)_{\mathcal{V}(u_{xx})}$ & $\bc_{\mathcal{V}(u_{xx})} =[-0.8912,-0.8921,-0.4464]$  & 0.2817 \\
    Use $\cS(\bF)_{\mathcal{V}(u_{xxxx})}$ & $\bc_{\mathcal{V}(u_{xxxx})} =[-0.9701,-0.9916,-0.4785]$  & \textbf{0.1468} \\
    Use $\cS(\bF)_{\mathcal{V}((u^2)_{x})}$ & $\bc_{\mathcal{V}((u^2)_{x})} =[-0.9642, -0.9706, -0.4867]$  & 0.1483\\
    \midrule
    \multicolumn{3}{l}{Identified equation becomes $u_t = - 0.9701 u_{xx} - 0.9916 u_{xxxx} - 0.4785 (u^2)_{x}$, using  $\mathcal{V}^* = \mathcal{V}(u_{xxxx})$,} \\
    \end{tabular}
    \caption{The KS equation in \eqref{e: ks} with $\sigma_{\rm NSR}=0.5$. [Step 3] for each features of $\mathcal{A}^* \cup \{u_t\} =\{u_t, u_{xx},u_{xxxx}, (u^2)_{x}\}$ computes the coefficients and find the minimum residual energy \eqref{e: coeff selection} feature.   
The minimum coefficient energy is given by $\mathcal{V}^* = \mathcal{V}(u_{xxxx})$ and the corresponding equation is identified. }
    \label{t: example of coefficient refine}
\end{table}

 \subsection{Effect of the meaningful data region $\Lambda$} 
 
We present the effect of the meaningful data region, using the KdV equation with $\sigma_{\rm NSR}=0.3$ as an example.  Figure \ref{fig: scales} shows the scale of features of $\bF$, $\bF_\Lambda$ ($\bF$ restricted on $\Lambda$), $\mathcal{S}(\bF)$,  $\mathcal{S}(\bF)_{\Lambda}$ and $\bW$ which is the Weak-form feature in the physical domain as in WeakIdent \cite{tang2023weakident}.  The magnitude of the Fourier features have a wider range, and using the meaningful data region $\Lambda$ helps to reduce this range by comparing $\bF$ with  $\bF_\Lambda$ and $\mathcal{S}(\bF)$ with  $\mathcal{S}(\bF)_{\Lambda}$.  With $\Lambda$ restriction, the shape of the graph looks similar to the physical values of Weak form.
In addition, due to the symmetry,  a quarter of data is used, i.e., the frequencies $\xi_x,\xi_t$ with the same magnitude have similar behaviors. 
We take a smaller collection of the frequency modes which reduces the  computational cost.

\begin{figure}[t!]
    \centering
    \includegraphics[width = 0.65\textwidth]{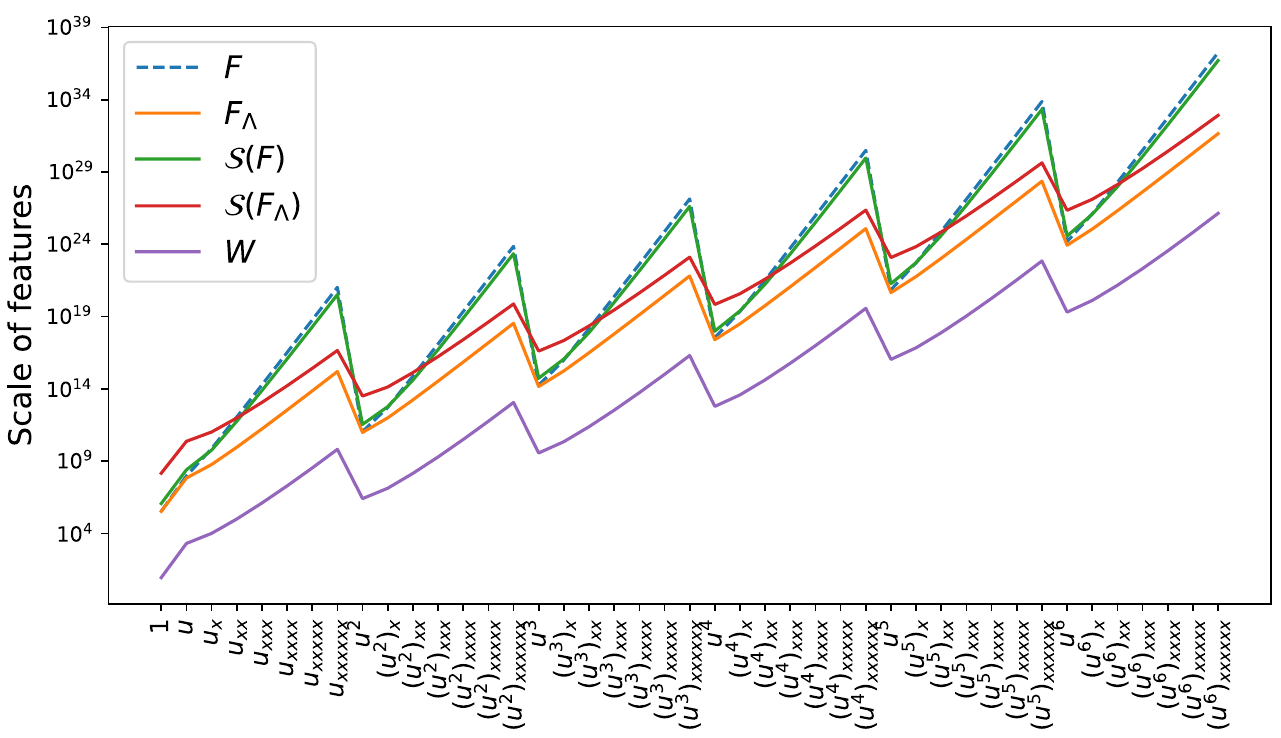} 
     \caption{Effect of the meaningful data region $\Lambda$, illustrated by the KdV equation \eqref{e: kdv}  with $\sigma_{\rm NSR}=0.3$.    The x-axis provides a list of features and the y-axis represents the scale of features in terms of the $\ell_2$-norm of the feature column.  
    We compare $\bF$ with $\bF_\Lambda$, $\mathcal{S}(\bF)$ with  $\mathcal{S}(\bF)_{\Lambda}$, and Fourier features with the Weak-form features in $\bW$   in WeakIdent \cite{tang2023weakident}. A restriction to the meaningful data region $\Lambda$  helps to reduce the range  of Fourier  features  and make the shape of the scale similar to that in  the  weak form. }
    \label{fig: scales}
\end{figure}

In Table \ref{t: use or not use Lambda}, we compare the identification results with or without the restriction to the meaningful data region $\Lambda$. For the KdV equation \eqref{e: kdv} and the KS equation \eqref{e: ks} on clean and noise data $\sigma_{\rm NSR} = 0.3$, we show how the restriction to $\Lambda$ helps to find the correct equation.   Without noise, the restriction to $\Lambda$ makes little differences. When the noise level increases, the results without restriction to $\Lambda$ may give completely wrong results.  

\begin{table}[t!]
    \centering
    \begin{tabular}{l|l|l}
    \toprule
     KdV Eq. \eqref{e: kdv} & true equation & $ u_t = - 1.0 u_{xxx} - 0.5 (u^2)_{x}$\\
     \midrule
    $\sigma_{\rm NSR}=0$    &   with $\Lambda$&
     $u_t = - 0.998 u_{xxx} - 0.499 (u^2)_{x} $\\
     & without $\Lambda$ &
     $u_t = - 0.998 u_{xxx} - 0.499 (u^2)_{x}$\\
     \hline
      $\sigma_{\rm NSR}=0.3$    &   with $\Lambda$ &
     $u_t = - 1.012 u_{xxx} - 0.499 (u^2)_{x}$\\
     & without $\Lambda$ & $u_t = - 151.987 u_{x}$\\
     \midrule
     \midrule
     KS Eq. \eqref{e: ks}& true equation & $u_t = - 1.0 u_{xx} - 1.0 u_{xxxx} - 0.5 (u^2)_{x}$ \\
     \hline
      $\sigma_{\rm NSR}=0$    &   with $\Lambda$&
     $u_t = - 1.0 u_{xx} - 1.0 u_{xxxx} - 0.5 (u^2)_{x} $\\
     & without  $\Lambda$ & $u_t = - 0.999 u_{xx} - 0.999 u_{xxxx} - 0.5 (u^2)_{x} $\\
     \hline
     $\sigma_{\rm NSR}=0.3$    &  with $\Lambda$ & $u_t = - 0.978 u_{xx} - 0.977 u_{xxxx} - 0.49 (u^2)_{x}$\\
     & without  $\Lambda$ &
     $u_t = - 0.035 (u^4)_{x}$\\
     \bottomrule
    \end{tabular}
    \caption{Effect of applying FourierIdent with or without restriction to the meaningful data region $\Lambda$.  With an increased level of noise, the benefit of having $\Lambda$ is clearly demonstrated. } 
    \label{t: use or not use Lambda}
\end{table}

\begin{figure}[t!]
    \centering
    \begin{tabular}{cc}
    (a) $\sigma_{\rm NSR}=0.8$ & (b) $\sigma_{\rm NSR}=0.8$ \\
    \includegraphics[width = 0.37\textwidth]{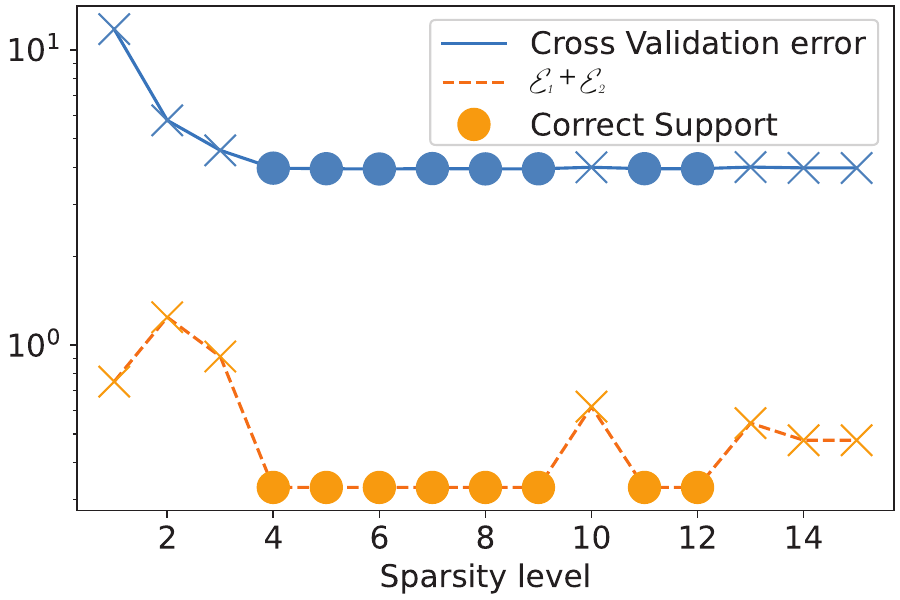}
    &
    \includegraphics[width = 0.37\textwidth]{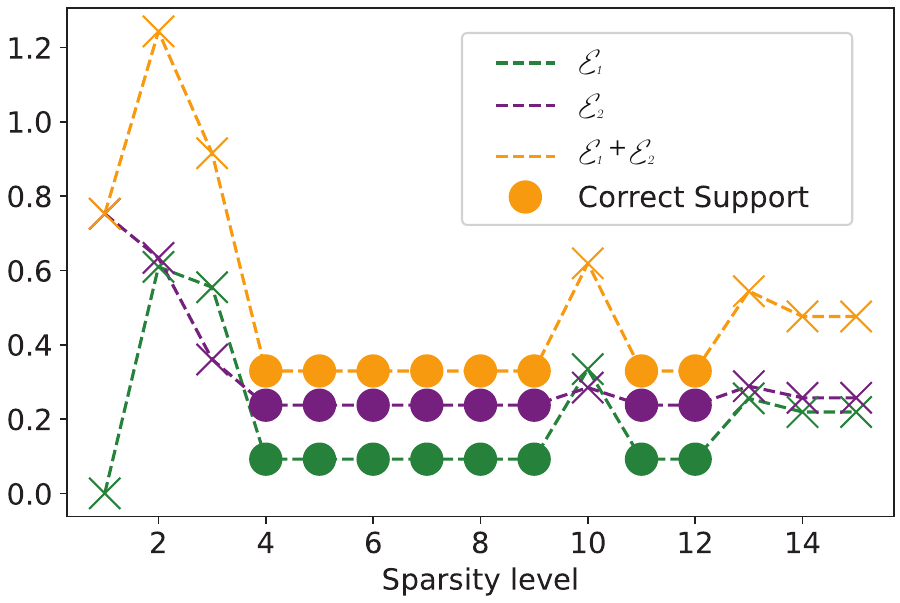}
    \end{tabular}
    \caption{Benefits of using the energy    in \eqref{e:energyCoreRegion} to find the optimal $k^*$.  
    (a) and (b) are both for KS equation with  $\sigma_{\rm NSR}=0.8$. The x-axis represents the initial sparsity level. 
    The dots represent the cases when the trimmed support is the correct one.
    In (a), the y-axis represents the values of Cross-Validation error and the energy in \eqref{e:energyCoreRegion}.
    When the given data are noisy, as in (a), the results with sparsity levels $10,13,14,15$ give rise to  small cross-validation errors but large  energy $\mathcal{E}_1+\mathcal{E}_2$ defined in \eqref{e:energyCoreRegion}. In (b), the green curve shows the fitting residual $\mathcal{E}_1$ in \eqref{e: support loss 1}, and the purple curve shows the stability term $\mathcal{E}_2$ in \eqref{e: support loss 2}. The new energy as the sum of the fitting residual and the stability term is represented by the yellow curve.}
    \label{fig: support energy} 
\end{figure}

\subsection{Understanding the new energy in \eqref{e:energyCoreRegion}}
\label{s: energy analysis experiments}

We introduce a new energy based on the core regions of features in Section \ref{ss: support selection active modes}. In this subsection, we show the effect of the energy in \eqref{e:energyCoreRegion} compared to the Cross-Validation (CV) error used in \cite{he2022robust,tang2023weakident}.  
In Figure \ref{fig: support energy}, we present the result for the KS equation \eqref{e: ks} with $\sigma_{\rm NSR}=0.8$.  In Figure \ref{fig: support energy} (a), the yellow curve shows the new energy \eqref{e:energyCoreRegion} value and the blue curve shows the CV error.  The x-axis shows the initial sparsity as the input of SP, and the sparsity after  group trimming is usually much smaller than the initial sparsity.  The yellow and blue circles indicate the cases when the correct supports are found after group trimming.  
After the sparsity level $k=3$, the CV errors are all similarly low. If we use the CV error, the wrong supports are identified when $k=10, 13, 14, 15$.   The yellow curve using the new energy is more consistent: the  sparsity associated with a low energy corresponds to the correct support.  
Figure \ref{fig: support energy} (b) shows the values of the two terms in the energy \eqref{e:energyCoreRegion}: the green curve shows the fitting residual $\mathcal{E}_1$, 
and the purple curve shows the stability term $\mathcal{E}_2$. The yellow curve shows the sum of them, which is  the same as the yellow curve in Figure \ref{fig: support energy} (a).    While   the residual curve (green curve) is minimized at the wrong sparsity $k=1$, the total energy as the sum of the fitting residual and the stability term is minimized at the correct sparsity and support.

\subsection{FourierIdent experiments with an increasing complexity}
We experiment FourierIdent with an increasing complexity of the initial conditions.   
We use the KdV equation in \eqref{e: kdv}  with different initial conditions (IC) where we can control the complexity:  
\begin{equation}
    u_0 = \sum_{r = 1}^{R} \cos(rx + c_1) + \sin(rx + c_2),
    \label{e:differentmodes}
\end{equation}
where $c_1$ and $c_2$ are two random numbers and $R$ denotes the total number of modes used in $u_0$. The larger the $R$ is, the more complex the given data are in both physical and frequency domains. We use periodic spatial domain such that  $x\in [-3.1416, 3.1293]$ with spatial spacing $\Delta x = 0.123$ and the time domain $t \in [0, 0.02]$ with temporal spacing $\Delta t =   3.9978\times 10^{-5}$.  In Figure \ref{fig:kdv5modes} (a) - (e), we show clean data for the initial conditions with $R=$ 1, 20, 30, 40, and 50 modes. 
It is shown that the pattern in (a) is relatively simple, and pattern in  (e) is the most complex one. We use a simulated solution from each complexity mode from $R = 1$ to $R = 60$.
\begin{figure}[t!]
    \centering
    \begin{tabular}{ccccc}
    (a) 1 mode & (b) 20 modes & (c) 30 modes & (d) 40 modes & (e) 50 modes \\
     \includegraphics[width = 0.18\textwidth]{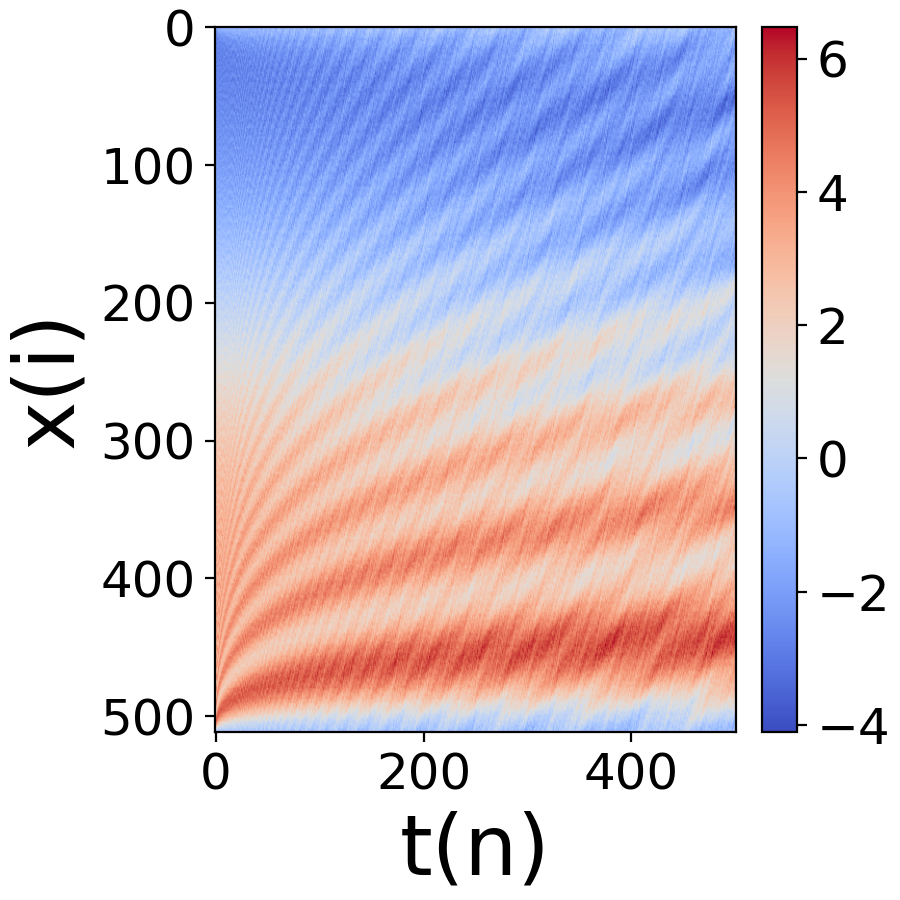}   
     &\includegraphics[width = 0.18\textwidth]{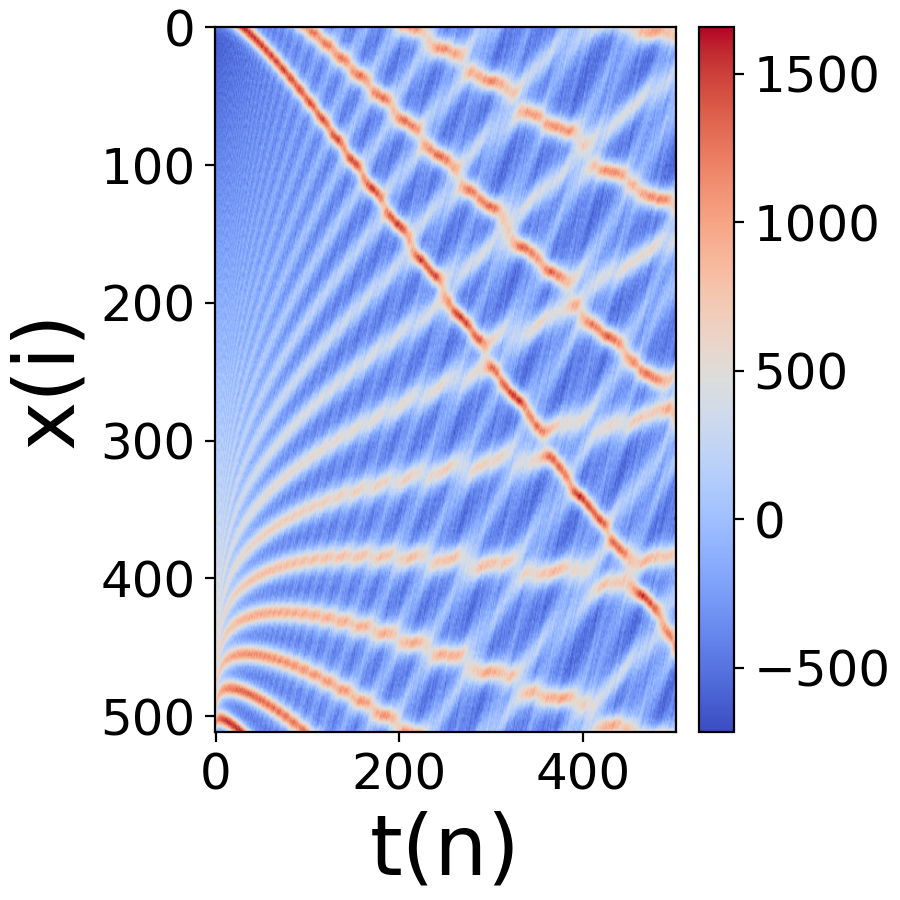} 
     &\includegraphics[width = 0.18\textwidth]{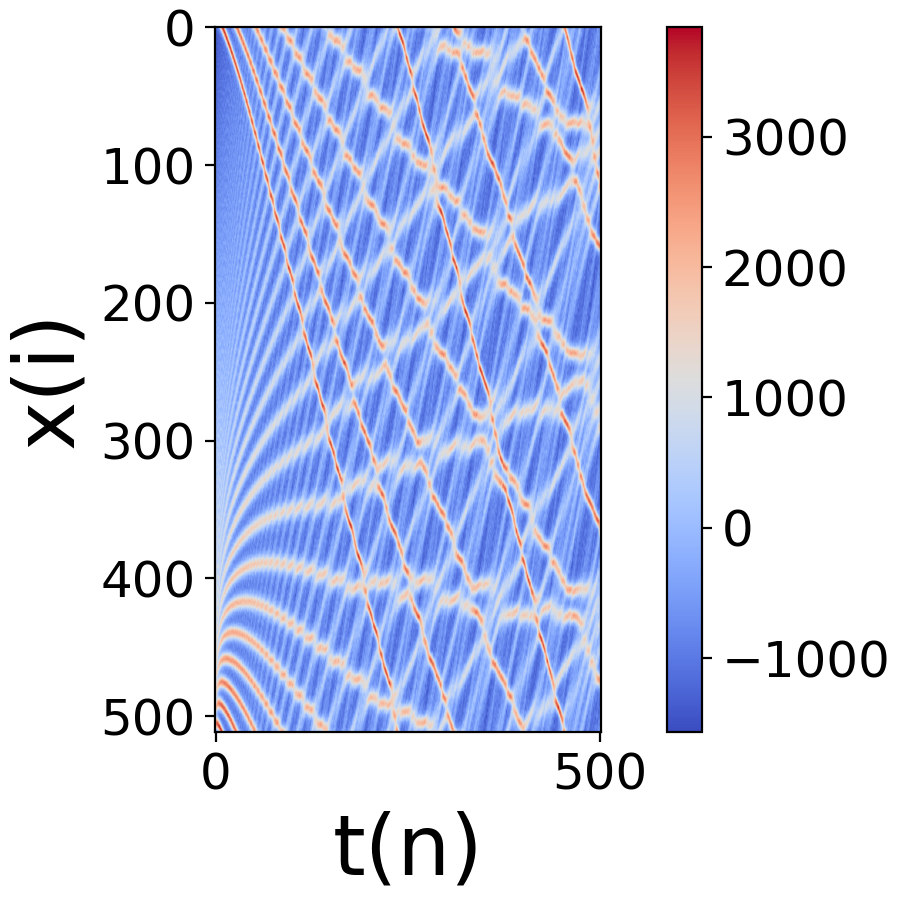} 
     &\includegraphics[width = 0.18\textwidth]{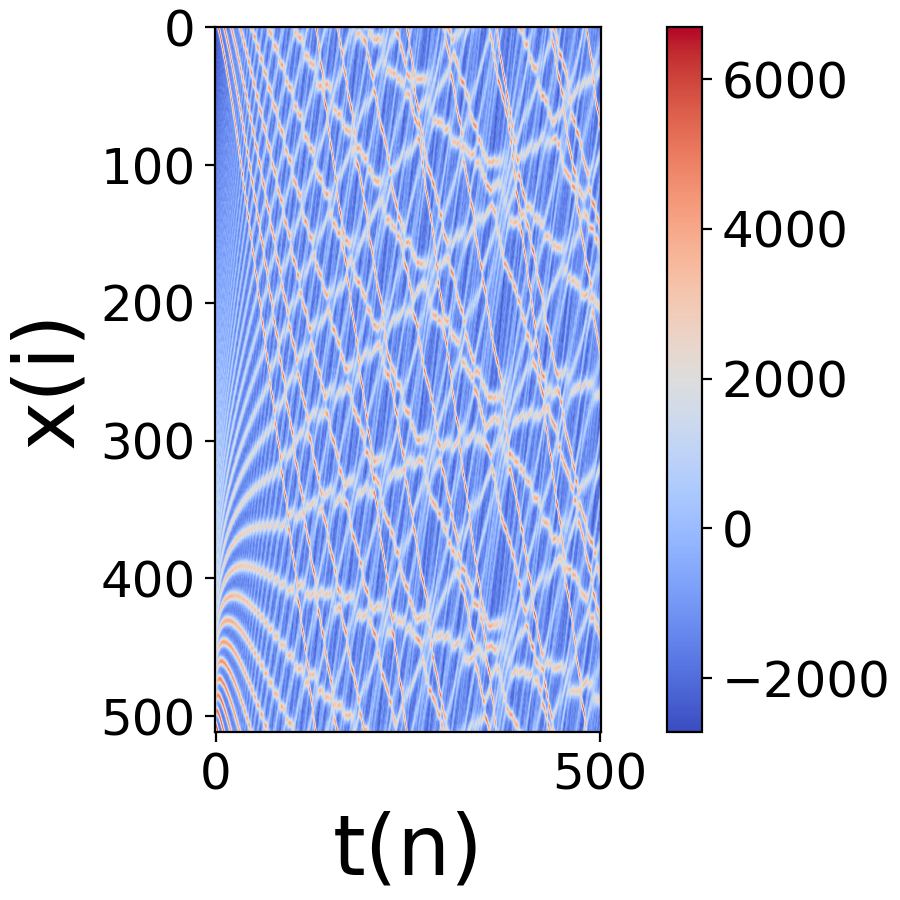}
     &\includegraphics[width = 0.18\textwidth]{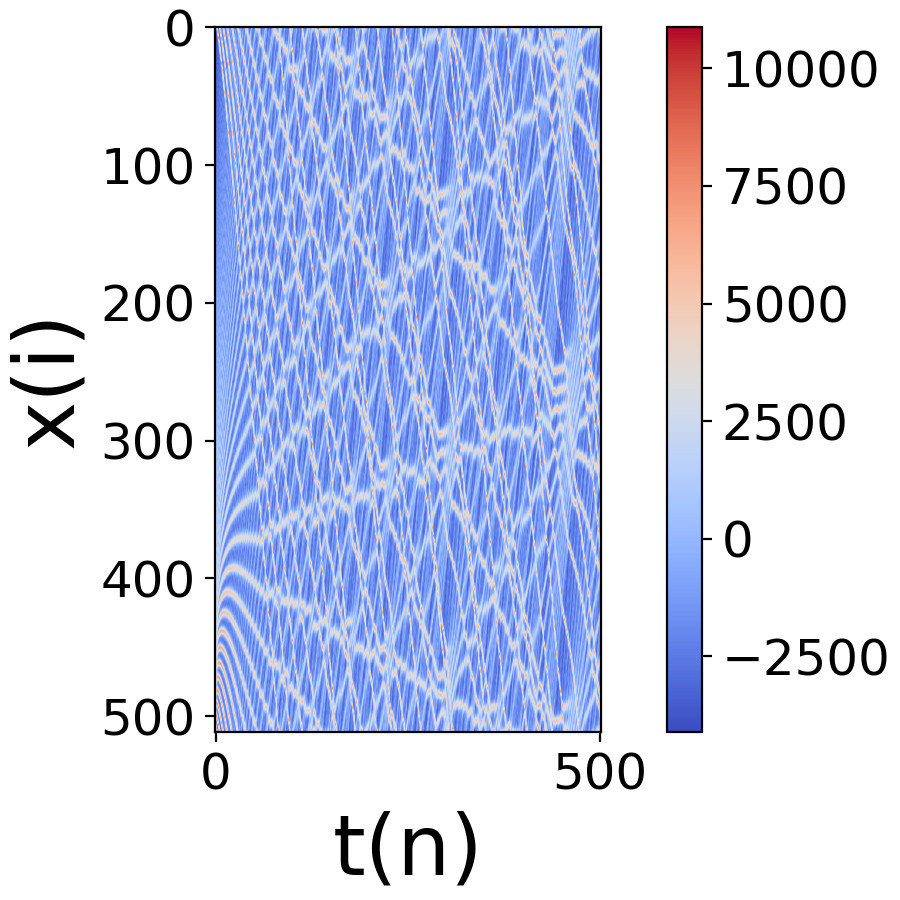} \\
    \end{tabular}
    \begin{tabular}{cccc}
    (f) $\sigma_{\rm NSR}=0.2$ & (g) $\sigma_{\rm NSR}=0.25$ & (h) $\sigma_{\rm NSR}=0.3$ & (i) $\sigma_{\rm NSR}=0.35$  \\
    \includegraphics[width=0.23\textwidth]{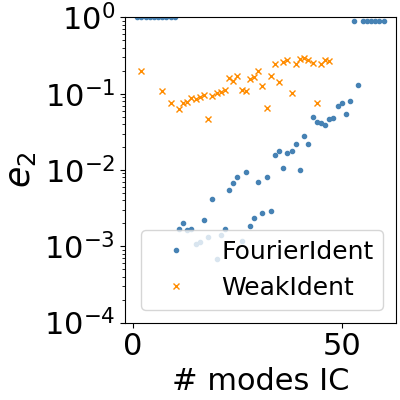} &
     \includegraphics[width=0.23\textwidth]{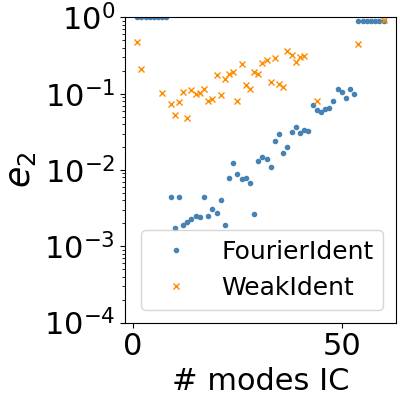} & 
     \includegraphics[width=0.23\textwidth]{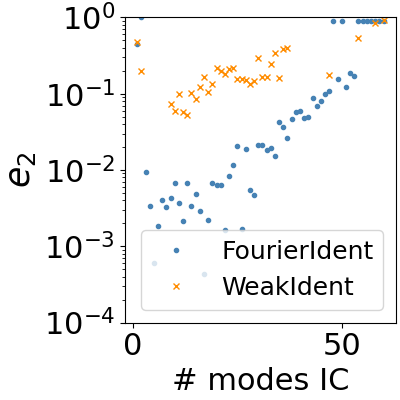} &
     \includegraphics[width=0.23\textwidth]{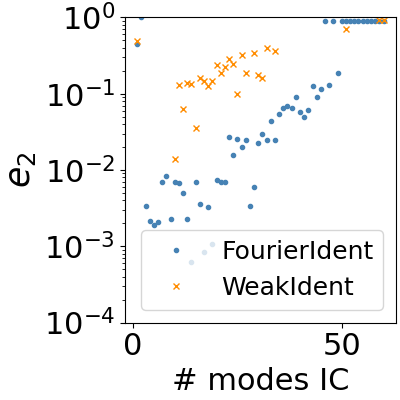} 
       \end{tabular}
\caption{Influence of increasing complexity. (a) - (e) Clean data of the KdV equation \eqref{e: kdv} for the initial condition \eqref{e:differentmodes} with 
$R=$ 1, 20, 30, 40, and 50. From (a) to (e), the patterns become  more complex. 
(f) - (i) show comparison results between FourierIdeant and WeakIdent for different noise levels such that  $\sigma_{\rm NSR} \in \{0.05,0.1,0.15,0.2,0.25,0.3,0.35 \}$. Each dot represents the $e_2$ identification error in each experiment.  When $10 < R <40$, FourierIdent gives rise to more accurate recovery results.
}
\label{fig:kdv5modes}
\end{figure}
Figure \ref{fig:kdv5modes} (f)-(i) show the performance of FourierIdent compared with WeakIdent, for different level of noise such that  $\sigma_{\rm NSR}=
0.2,0.25,0.3,0.35$.   
For each mode $R$ ranging from 1 to 60, we simulate 20 different datasets with different random seeds.
In each graph, each column contains 20 dots for the 20 independent experiments, and the height represents the $e_2$ identification error  of Fourierident (blue) and WeakIdent (yellow). 

This experiment shows that, for  data with different complexity  and with different noise levels,  FourierIdent gives rise to smaller  $e_2$ errors.  Figures (f)-(i) shows that when $K > 10$, FourierIdent (marked in blue) performs better than WeakIdent (marked in yellow), when $\sigma_{NSR}$ is large and the data have more frequency modes.

\subsection{FourierIdent experiment with an increasing time  for data collection
}

The next experiment shows how the increasing of time interval for data collection
can improve the identification result.   Figure \ref{f: limiting behaviors} (a) and (b) show a realization of the given data for the KS equation \eqref{e: ks} with $\sigma_{\rm NSR}=0.5$. (a) is  for $0 < t < 500$ and (b) is for $0 < t < 5,000$.  The data in (b) clearly have more repetition of the pattern. 
Figure \ref{f: limiting behaviors} (c)-(e) show experiments with varying noise levels such that $\sigma_{\rm NSR}=0.01, 0.5,  1.0$.  For each graph,  the x-axis represents the final time $t_{end}$ used for the data, and y-axis shows the $e_2$ error of the identified coefficient. 
In (c), when the noise level is low, both FourierIdent and WeakIdent show good results with the $e_2$ error less than $10^{-3}$.  This is based on one experiment for each noise level.  
As the noise level increases, FourierIdent consistently gives better results in (d) and (e).  
(e) shows an extremely noisy case for $\sigma_{\rm NSR} = 1$ when the given data are highly corrupted by noise. For both (d) and (e), as $t_{end}$ gets bigger, both FourierIdent and WeakIdent give rise to decreasing errors , and the error of FourierIdent is lower.   
The core region is defined by the high frequency responses, thus with more data, 
 the response will be stronger and the core region of feature can separate the noise better. FourierIdent is slightly worse than WeakIdent when $\sigma_{\rm SNR}=0.01$ compared to $\sigma_{\rm SNR}=0.5$ or 1.0. { This shows the effect that 
 FourierIent performs well due to having enough high-frequency responses when the data are exposed to large noise.} 
 In other words, FourierIdent is relatively more effective and robust under noise compared to methods using physical features, which is consistent with our observation in Figure \ref{fig:kdv5modes}.

\begin{figure}[t!]
    \centering
    \begin{tabular}{cc}
    (a) $0\leq t \leq 500$ & (b) $0\leq t \leq 5,000$ \\
    \includegraphics[width = 0.46\textwidth]{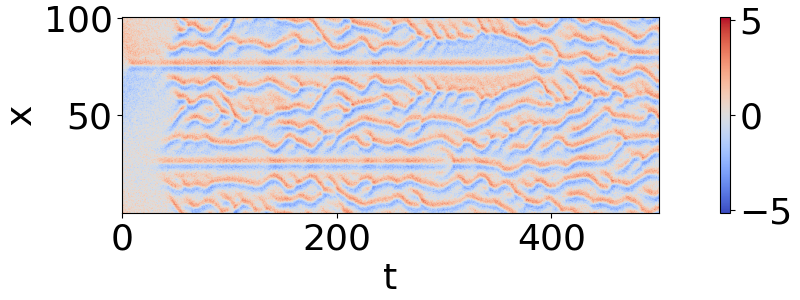}   &
    \includegraphics[width = 0.46\textwidth]{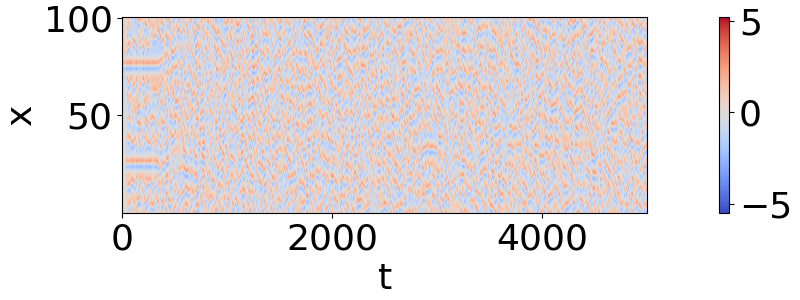} \\
    \end{tabular}
    \begin{tabular}{ccc}
    (c) $\sigma_{\rm NSR} = 0.01$ &  (d) $\sigma_{\rm NSR} = 0.5$ & (e) $\sigma_{\rm NSR} = 1.0$\\
    \includegraphics[width = 0.29\textwidth]{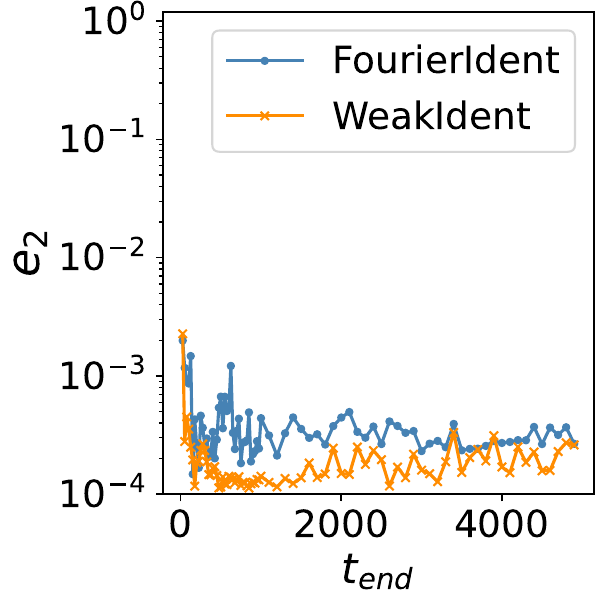}
    & 
    \includegraphics[width = 0.29\textwidth]{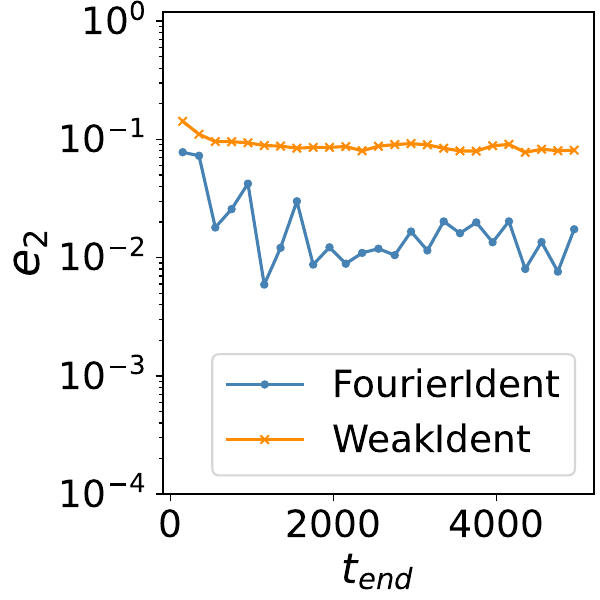} & 
    \includegraphics[width = 0.29\textwidth]{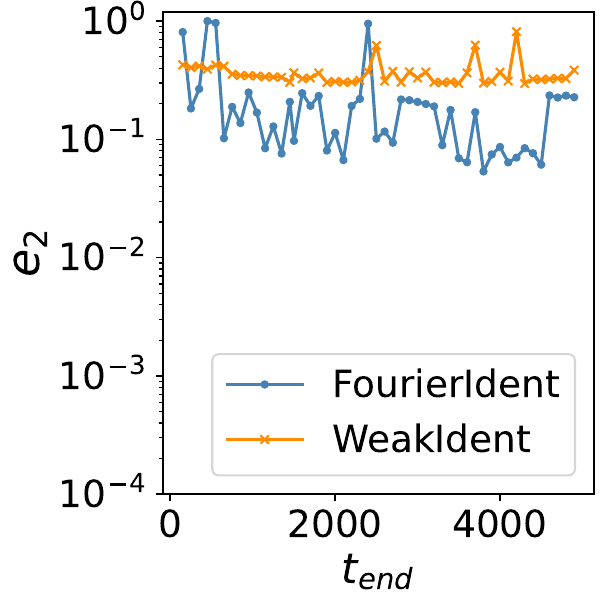} \\
    \end{tabular}
    \caption{Influence of increasing time in data collection. (a) - (b):
    The given data of the KS equation \eqref{e: ks} 
    (a) for $0 < t < 500$, and (b) for $0 < t < 5,000$.  (c)-(e): the $x$-axis represents the final time $t_{end}$  for  data collection, and the $y$-axis shows the $e_2$ error of the identified coefficient, for FourierIdent (blue) and WeakIdent (yellow).  In (c), for low levels of noise  $\sigma_{\rm NSR}=0.01$, both FourierIdent (blue) and WeakIdent (yellow) give small $e_2$ errors. In (d) and (e), when $\sigma_{\rm NSR} = 0.5$ and $1$, as $t_{end}$ gets bigger, both FourierIdent and WeakIdent yield smaller errors, while FourierIdent gives rise to a smaller error than WeakIdent.  }
        \label{f: limiting behaviors}
\end{figure}

\subsection{FourierIdent comparison results}

We test FourierIdent, and compare it with WeakIdent\cite{tang2023weakident} and WSINDy \cite{messenger2021weak}.  
We first present an example showing the identified equations, and then  present more  statistics about the  recovery. 
In Table \ref{f: example output equations}, we present the identified equation and the $e_2$ error for the equations listed in Table \ref{t: equations} with $\sigma_{\rm NSR}=0.3$. 
FourierIdent identifies the correct equation, i.e., the correct support and highly accurate coefficient values, under various scenarios. 

\begin{table}[t!]
\begin{tabular}{lllp{8cm}c}
\toprule
Equ & $\sigma_{\rm NSR}$ & Method & Identified equation & $e_2$ \\
\midrule
\multirow{3}{*}{\eqref{e: heat}} &  \multirow{3}{*}{0.3} & FourierIdent &  $u_t = + 0.1 u_{xx} $ & 0.002536 \\
 &  & WeakIdent & $u_t = + 0.1 u_{xx}$  & 0.003609 \\
 &  & WSINDy & $u_t = 0.098 u_{xx}$ & 0.012311\\
\midrule
\multirow{3}{*}{\eqref{e: tran diff}} & \multirow{3}{*}{0.3} & FourierIdent &  $u_t = - 1.0 u_{x} + 0.099 u_{xx} $ &  0.000719\\
 &  & WeakIdent & $u_t = - 0.997 u_{x} + 0.102 u_{xx}$ &  0.003477 \\
 &  & WSINDy & $u_t = 0.434 + -0.793u_{xxxxxx} -1.003u^2 +0.096(u^2)_{x} +0.470(u^2)_{xxxxxx}$ & 1.017465\\
\midrule
\multirow{3}{*}{\eqref{e: burgers diff w diff}} & \multirow{3}{*}{0.3} & FourierIdent & $u_t = + 0.051 u_{xx} + 0.249 (u^2)_{x}$ &0.006846 \\
 &  & WeakIdent & $u_t = + 0.052 u_{xx} + 0.248 (u^2)_{x} $ & 0.011226 \\
 &  & WSINDy 
 & $u_t = + 0.048 u_{xx} + 0.248 (u^2)_{x}$ & 0.009981\\
\midrule
\multirow{3}{*}{\eqref{e: kdv}} & \multirow{3}{*}{0.3} & FourierIdent & $u_t = - 0.997 u_{xxx} - 0.499 (u^2)_{x}$ & 0.0031048 \\
 &  & WeakIdent & $u_t = - 0.987 u_{xxx} - 0.497 (u^2)_{x}$ & 0.0121109  \\
 &  & WSINDy & $u_t = -0.977 u_{xxx} -0.4967 (u^2)_{x} $ & 0.0203434 \\
 \midrule
\multirow{3}{*}{\eqref{e: ks}} & \multirow{3}{*}{0.3} & FourierIdent & $u_t = - 0.977 u_{xx} - 0.97 u_{xxxx} - 0.488 (u^2)_{x}$ &  0.02663459 \\
 &  & WeakIdent &$u_t = - 0.95 u_{xx} - 0.947 u_{xxxx} - 0.476 (u^2)_{x}$  & 0.051067 \\
 &  & WSINDy & $u_t = -0.9529u_{xx} -0.9493u_{xxxx} -  -0.4779 (u^2)_{x}$ & 0.048433\\
 \bottomrule
\end{tabular}
\caption{Identification results by FourierIdent, WeakIdent\cite{tang2023weakident} and WSINDy \cite{messenger2021weak} for the equations in Table \ref{t: equations} with $\sigma_{\rm NSR}=0.3$ using one realization of the PDE solution.}
\label{f: example output equations}
\end{table}

We further show the recovery statistics for the equations in Table \ref{t: equations}, in Figure \ref{f: comparison heat}, \ref{f: comparison bg}, \ref{f: comparison trans}, \ref{f: comparison kdv}, and \ref{f: comparison ks}. 
We present the results with different noise-to-signal-ratio $\sigma_{\rm NSR}$ using box plots of the identification error for 
FourierIdent, WeakIdent\cite{tang2023weakident} and WSINDy \cite{messenger2021weak}.
For each $\sigma_{\rm NSR}$, the dataset is simulated 20 times with various random seeds of noise.  FourierIdent is compatible with other methods in  identifying a partial differential equation. It is robust to high  levels of noise and capable of dealing with features with high-order derivatives.

\begin{figure}[t!]
    \centering
    \begin{tabular}{cccc}
     (a) & (b) &  (c) & (d) \\
     \includegraphics[width = 0.23\textwidth]{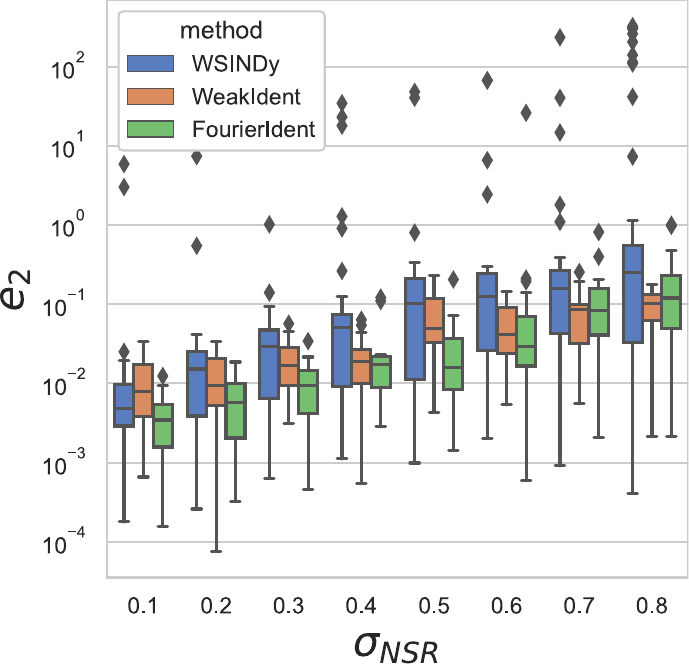}    &  \includegraphics[width = 0.23\textwidth]{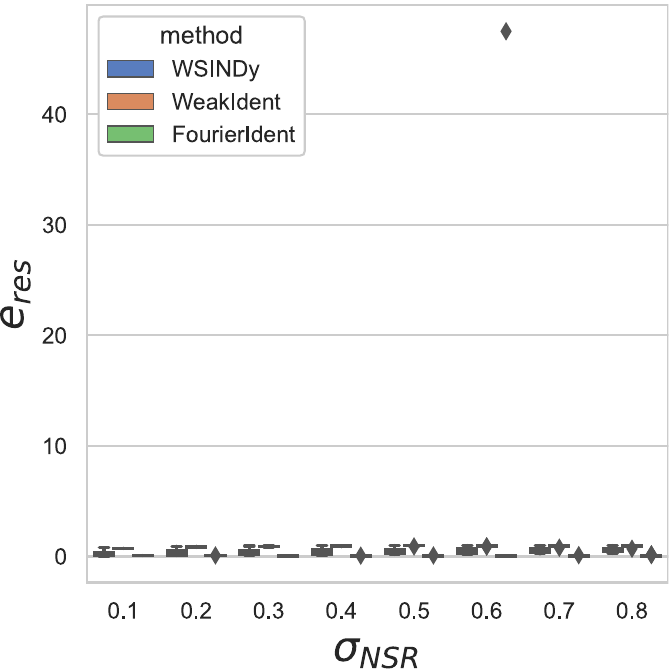}& 
    \includegraphics[width = 0.23\textwidth]{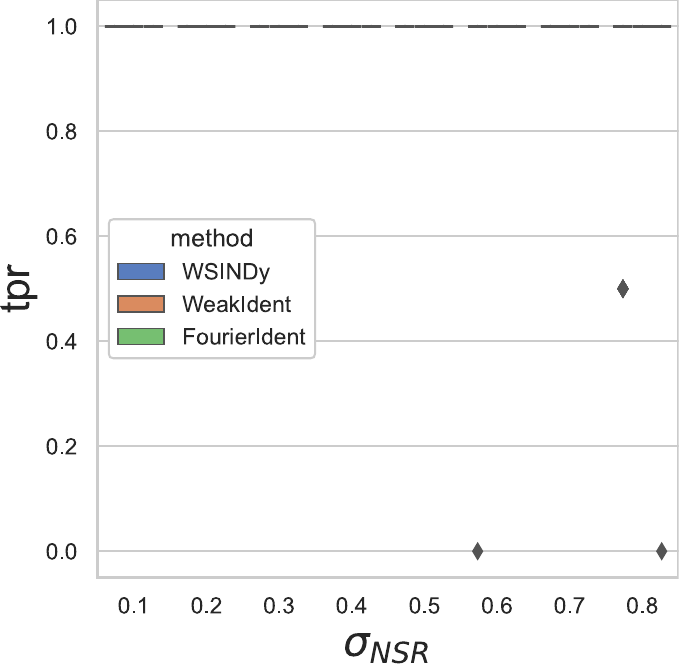}     & 
    \includegraphics[width = 0.23\textwidth]{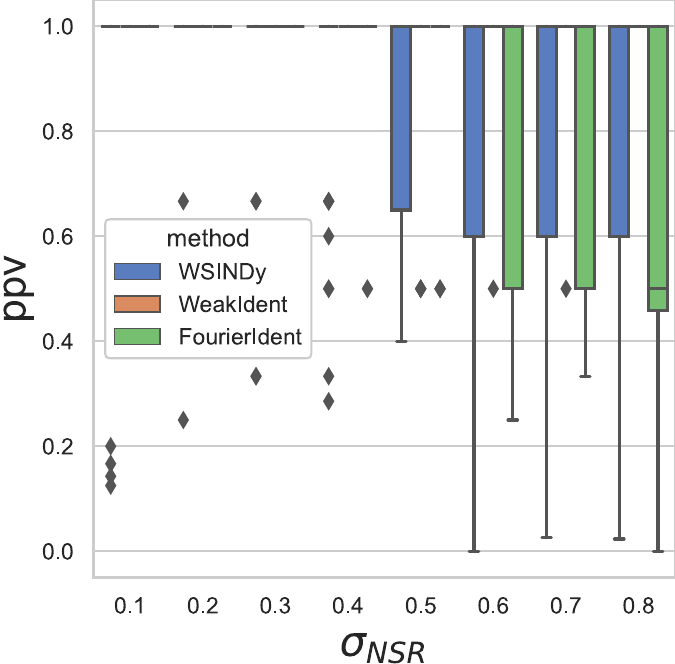}
    \end{tabular}
    \caption{Heat equation \eqref{e: heat}. (a) $e_2$ error, (b) $e_{res}$, (c) TPR, and (d) PPV. For each noise level $\sigma_{\rm NSR}$, we generate noise using 20 random seeds, and show a box plot for FourierIdent, WeakIdent, and WSINDy.  
    }
    \label{f: comparison heat} 
\end{figure}

\begin{figure}[t!]
    \centering
    \begin{tabular}{cccc}
     (a) & (b)  &      (c) & (d) \\
     \includegraphics[width = 0.23\textwidth]{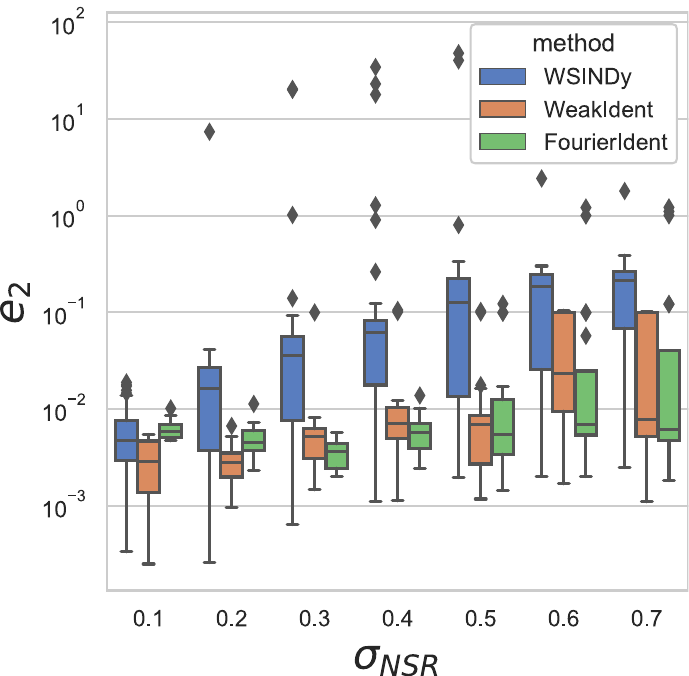}    &  \includegraphics[width = 0.23\textwidth]{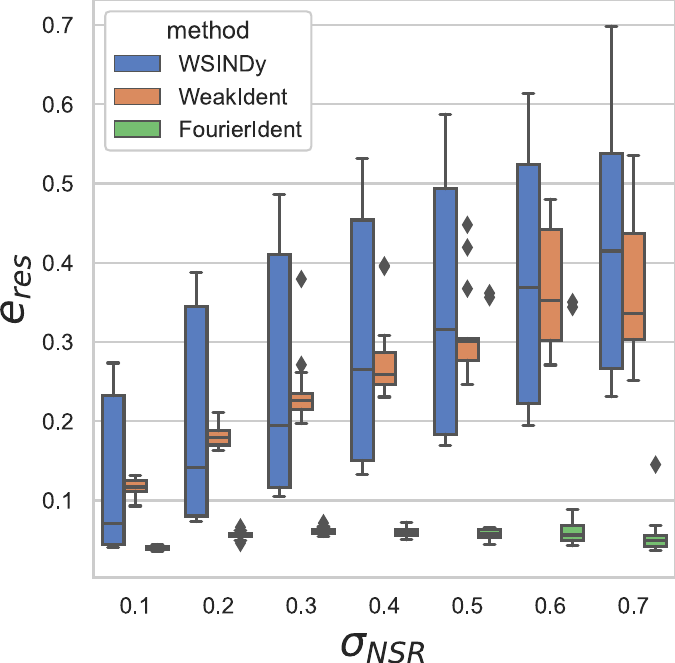} & 
    \includegraphics[width = 0.23\textwidth]{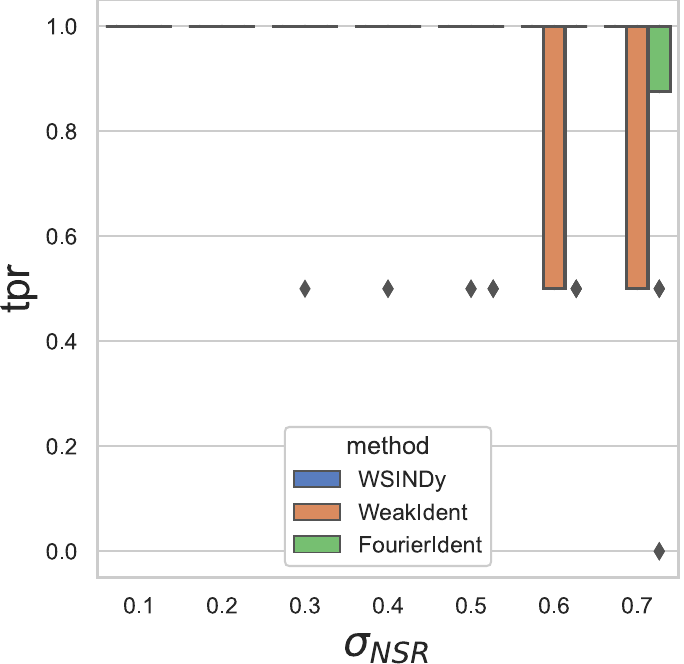}     & 
    \includegraphics[width = 0.23\textwidth]{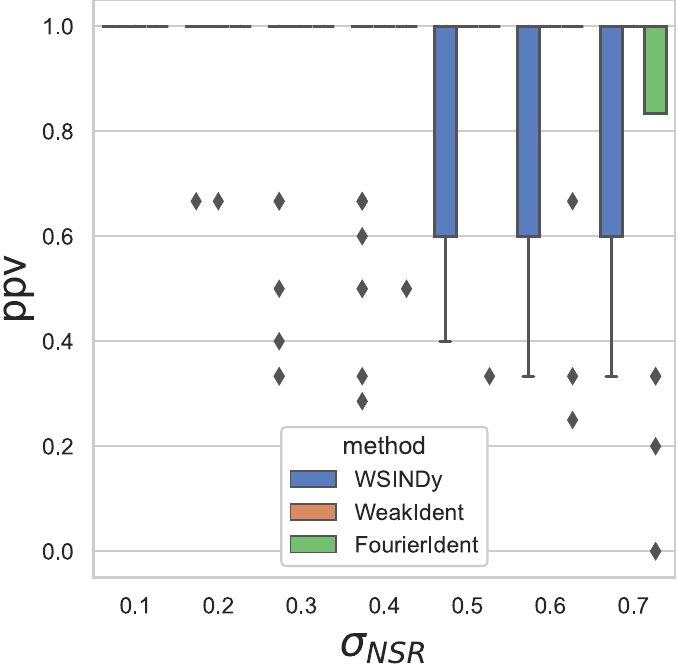}
    \end{tabular}
    \caption{Transport equation \eqref{e: tran diff}. (a) $e_2$ error, (b) $e_{res}$, (c) TPR, and (d) PPV. For each noise level $\sigma_{\rm NSR}$, we generate noise using 20 random seeds, and show a box plot for FourierIdent, WeakIdent, and WSINDy. }
    \label{f: comparison trans}
\end{figure}

\begin{figure}[t!]
    \centering
    \begin{tabular}{cccc}
     (a) & (b)  &  (c) & (d) \\
     \includegraphics[width = 0.23\textwidth]{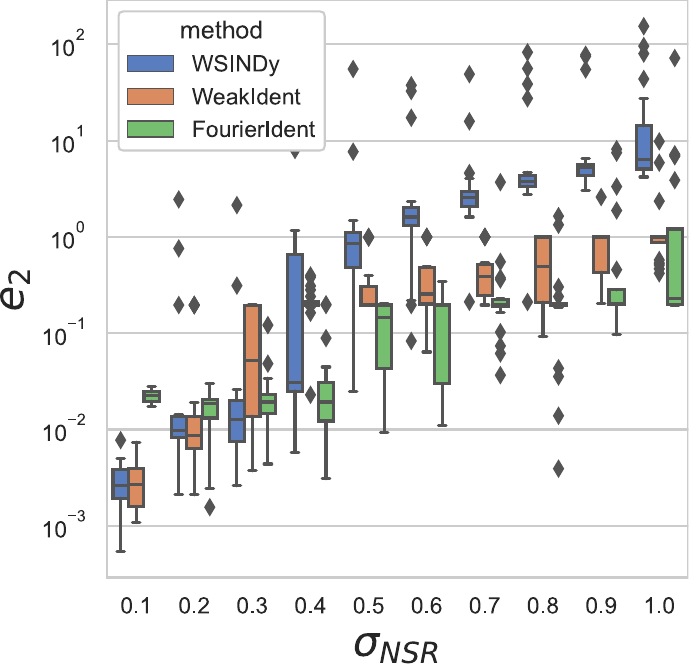}    &  \includegraphics[width = 0.23\textwidth]{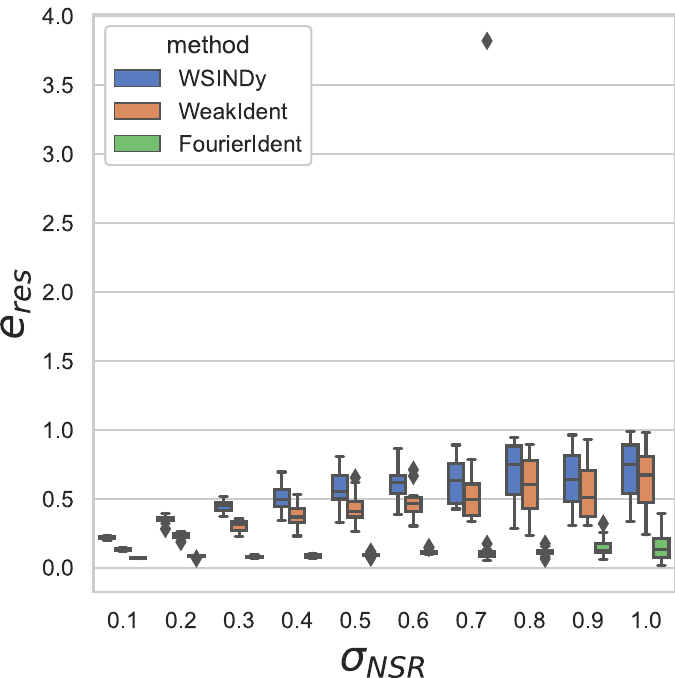}&
    \includegraphics[width = 0.23\textwidth]{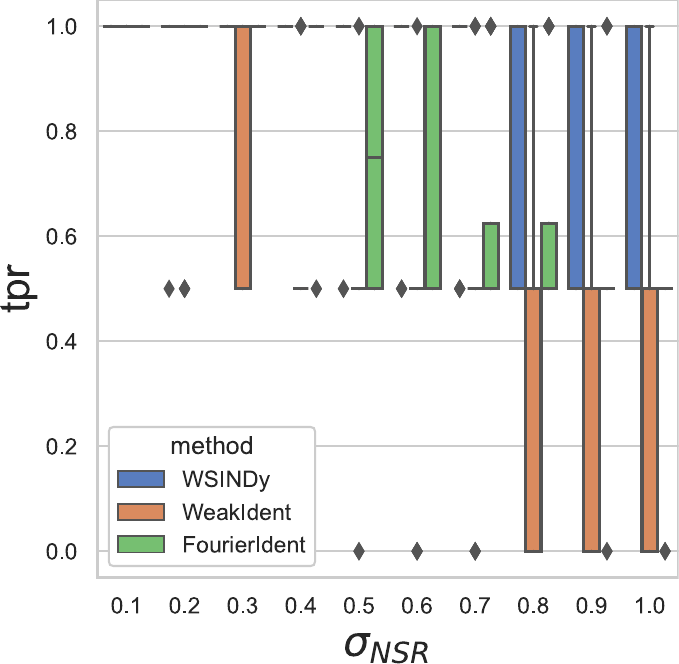}     & 
    \includegraphics[width = 0.23\textwidth]{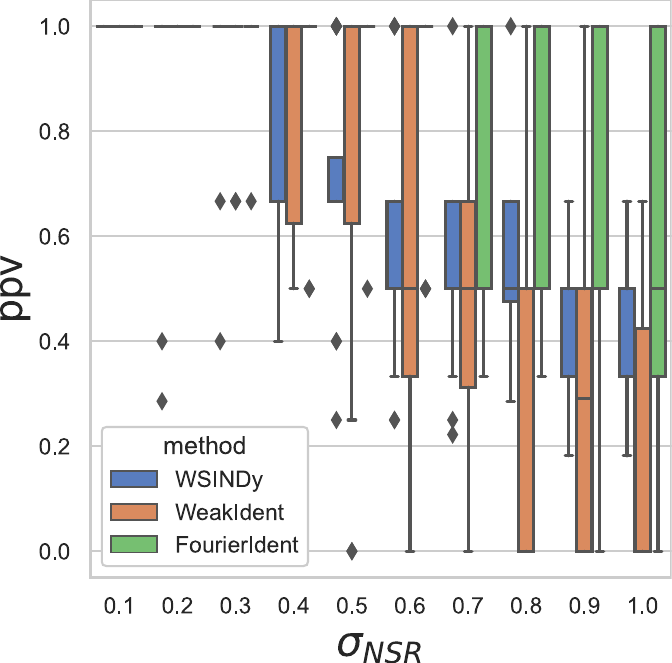}
    \end{tabular}
    \caption{The Burgers' equation \eqref{e: burgers diff w diff}.  (a) $e_2$ error, (b) $e_{res}$, (c) TPR, and (d) PPV. For each noise level $\sigma_{\rm NSR}$, we generate noise using 20 random seeds, and show a box plot for FourierIdent, WeakIdent, and WSINDy.}
    \label{f: comparison bg} 
\end{figure}

\begin{figure}[t!]
    \centering
    \begin{tabular}{cccc}
     (a) & (b)  & (c) & (d) \\
     \includegraphics[width = 0.23\textwidth]{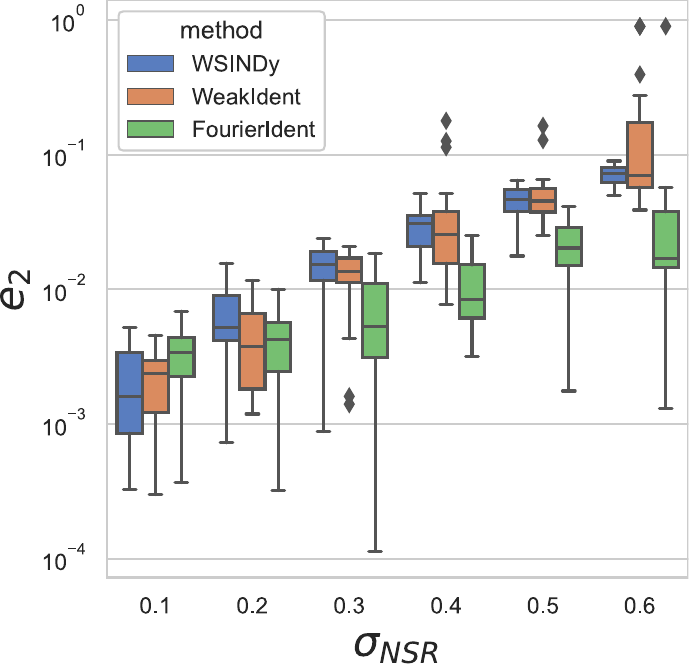}    &  \includegraphics[width = 0.23\textwidth]{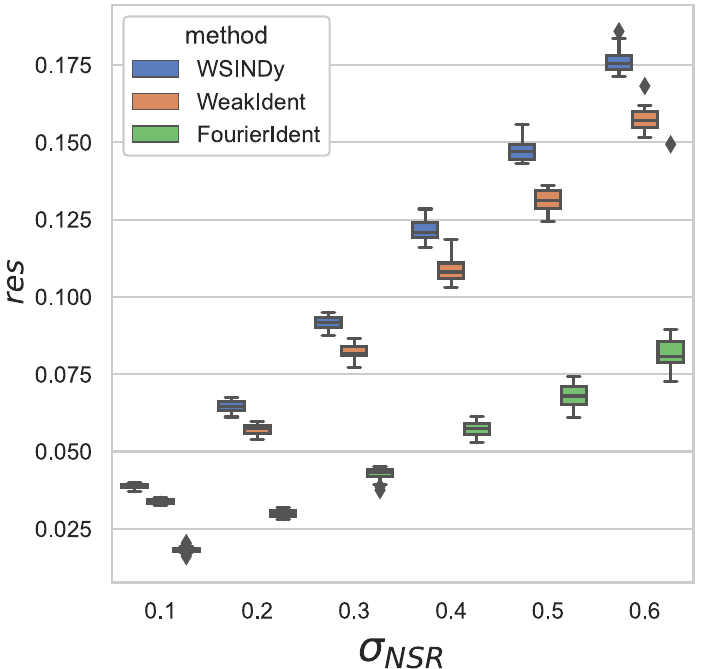}& 
    \includegraphics[width = 0.23\textwidth]{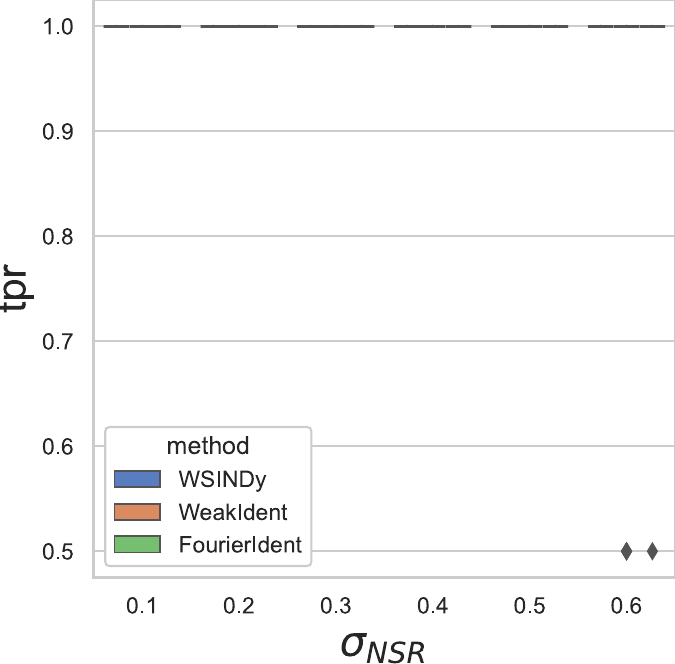}     & 
    \includegraphics[width = 0.23\textwidth]{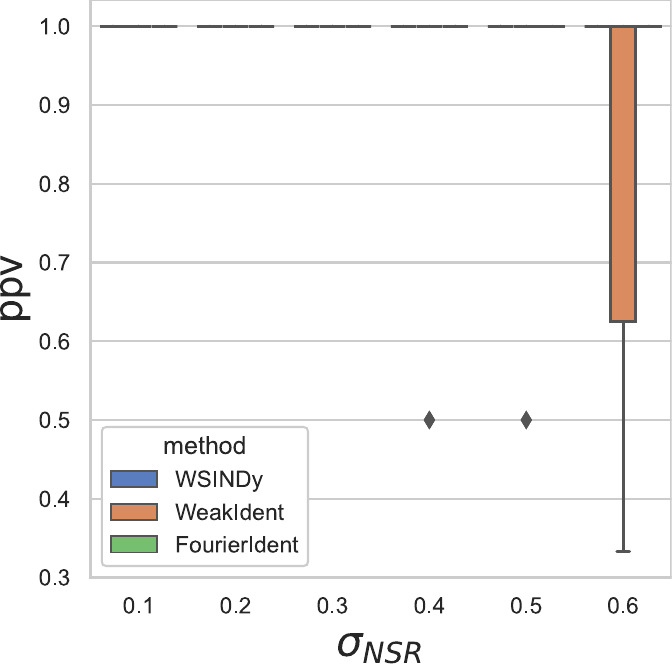}
    \end{tabular}
    \caption{The KdV equation \eqref{e: kdv}. (a) $e_2$ error, (b) $e_{res}$, (c) TPR, and (d) PPV. For each noise level $\sigma_{NSR}$, we generate noise using 20 random seeds, and show a box plot for FourierIdent, WeakIdent, and WSINDy. }
    \label{f: comparison kdv}
\end{figure}

\begin{figure}[t!]
    \centering
    \begin{tabular}{cccc}
     (a) & (b)  &      (c) & (d) \\
     \includegraphics[width = 0.23\textwidth]{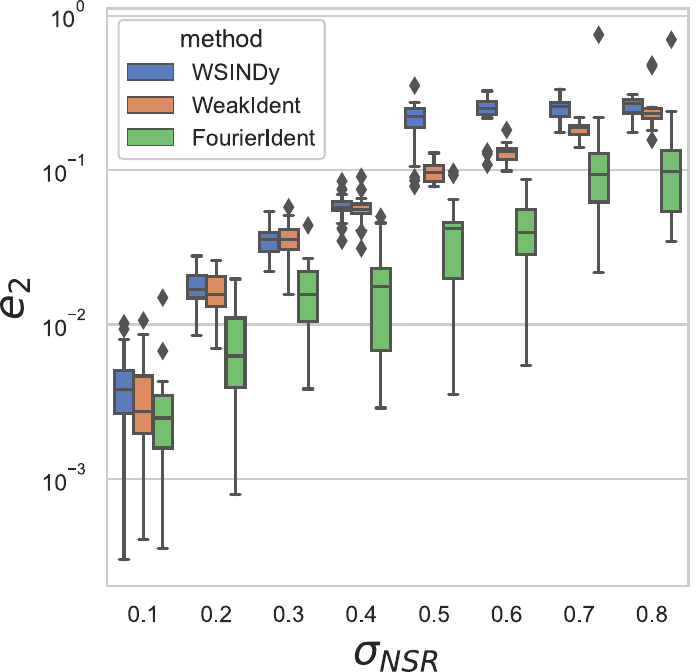}    &  \includegraphics[width = 0.23\textwidth]{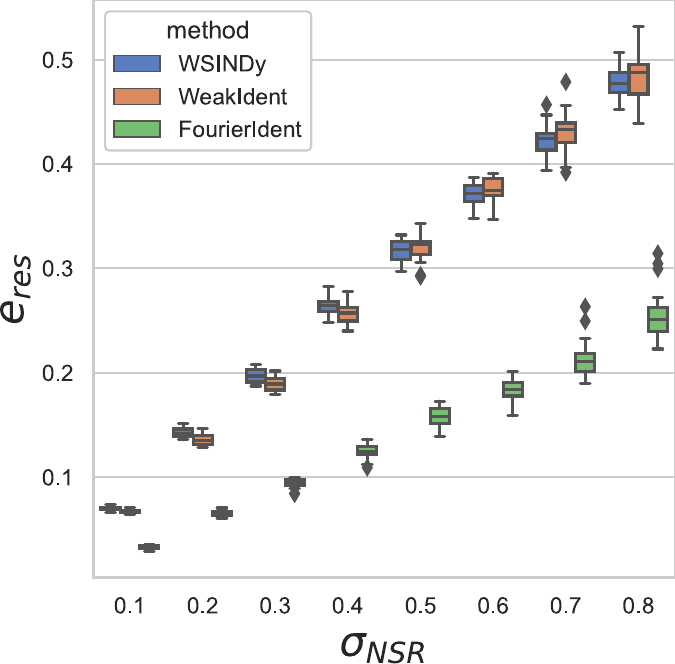}& 
    \includegraphics[width = 0.23\textwidth]{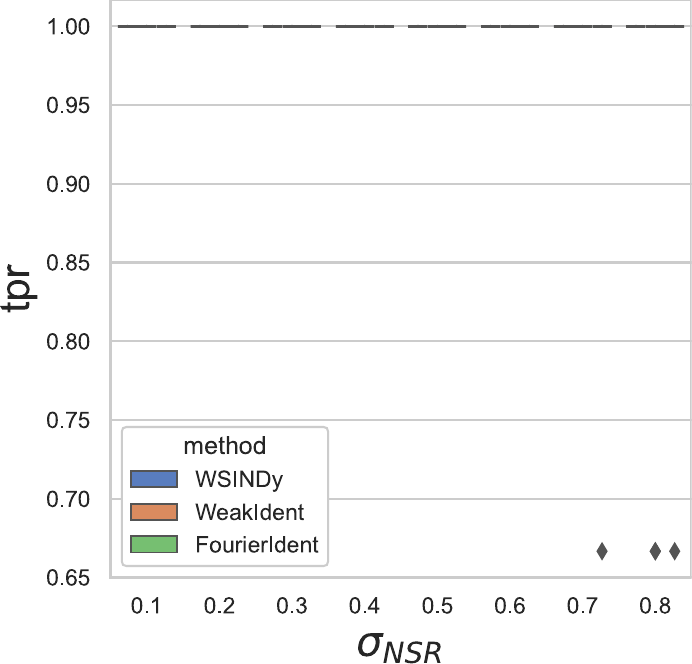}     & 
    \includegraphics[width = 0.23\textwidth]{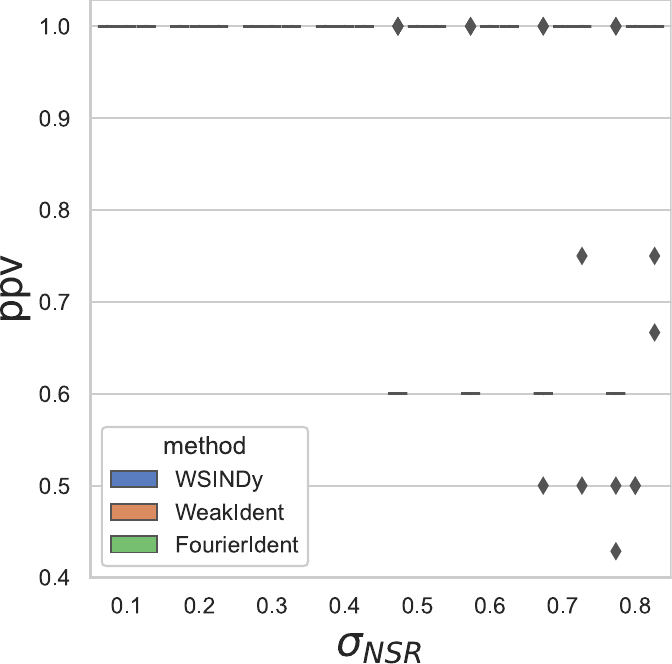}
    \end{tabular}
    \caption{Comparison results on the KS equation \eqref{e: ks} using multiple identification errors $e_2$ in (a), $e_{res}$ in (b), TPR in (c), and PPV in (d). For each noise level $\sigma_{NSR}$, we generate noise using 20 random seeds and show a box plot for FourierIdent, WeakIdent, and WSINDy. }
    \label{f: comparison ks}
\end{figure}

\section{Discussion and Conclusion} \label{s:conclusion}

We proposed FourierIdent, a method to identify differential equations in the frequency domain. We introduced denoising  Fourier features by smoothing, and frequency domain partitioning, e.g., the meaningful data region and the core regions of features.  We proposed an energy based on the core regions of features for coefficient identification.    
Different collections of high frequency responses are used to identify features and improve coefficient recovery. 
We present numerical experiments on various simulated datasets to compare FourierIdent with other state-of-art methods using weak formulations.  FourierIdent is robust to noise and can handle higher-order derivatives.  We show the benefits of FourierIdent on complex  datasets  simulated using different numbers of Fourier modes. 

FourierIdent may be computationally expensive since it iteratively finds different  collections  of active features and solves the least square problem many times,
but this part can be parallelized. 
In this paper, we consider features in the form of the derivatives of monomials.  
Expanding the feature dictionary is a possible direction in our future work.

\bibliographystyle{abbrv}
\bibliography{ref}

\end{document}